\newtheorem{theorem}{Theorem}[section]
\newtheorem{corollary}[theorem]{Corollary}
\newtheorem{lemma}[theorem]{Lemma}
\newtheorem{proposition}[theorem]{Proposition}
\theoremstyle{definition}
\newtheorem{definition}[theorem]{Definition}
\newtheorem{hypothesis}[theorem]{Hypothesis}
\theoremstyle{remark}
\newtheorem{remark}[theorem]{Remark}
\newcommand{\hilb}{\mathcal{H}}
\newcommand{\e}{\mathrm{e}}
\renewcommand{\Re}{\operatorname{Re}}
\newcommand{\ii}{\mathrm{i}}
\def\smalloverbrace#1{\mathop{\vbox{\m@th\ialign{##\crcr\noalign{\kern3\p@}%
	\tiny\downbracefill\crcr\noalign{\kern3\p@\nointerlineskip}%
	$\hfil\displaystyle{#1}\hfil$\crcr}}}\limits}
\definecolor{cblue}{rgb}{0.16, 0.32, 0.75}
\definecolor{cred}{rgb}{0.7, 0.11, 0.11}
\newcommand\blfootnote[1]{%
	\begingroup
	\renewcommand\thefootnote{}\footnote{#1}%
	\addtocounter{footnote}{-1}%
	\endgroup
}
\let\originalleft\left
\let\originalright\right
\renewcommand{\left}{\mathopen{}\mathclose\bgroup\originalleft}
\renewcommand{\right}{\aftergroup\egroup\originalright}
\newcommand{\refsigma}{$($\hyperref[eq:schro]{$\Sigma$}$)$}
\newcommand{\reftildesigma}{$($\hyperref[eq:schro2]{$\tilde\Sigma$}$)$}
\title{\textbf{On global approximate controllability of a quantum particle in a box by moving walls}}
\author[$\hspace{0cm}$]{Aitor Balmaseda$^{1,2,}$\footnote{abalmase@math.uc3m.es}}
\author[$\hspace{0cm}$]{Davide Lonigro$^{3,4,}$\footnote{davide.lonigro@ba.infn.it}}
\author[$\hspace{0cm}$]{Juan Manuel Pérez-Pardo$^{1,5,}$\footnote{jmppardo@math.uc3m.es}}
\affil[$1$]{\small Departamento de Matemáticas, Universidad Carlos III de Madrid, Avda. de la Universidad 30, 28911  Madrid, Spain}
\affil[$2$]{\small Department of Mathematics, Faculty of Nuclear Sciences and Physical Engineering, Czech Technical University in Prague, Trojanova 13, 12000 Prague 2, Czechia}
\affil[$3$]{\small Dipartimento di Fisica and MECENAS, Università di Bari, I-70126 Bari, Italy}
\affil[$4$]{\small Istituto Nazionale di Fisica Nucleare, Sezione di Bari, I-70126 Bari, Italy}
\affil[$5$]{\small Instituto de Ciencias Matemáticas (CSIC -- UAM -- UC3M -- UCM) ICMAT, C/ Nicolás Cabrera 13--15, 28049 Madrid, Spain.}
\begin{document}
	
\maketitle
\vspace{-.5cm}

\begin{abstract}
	We study a system composed of a free quantum particle trapped in a box whose walls can change their position. We prove the global approximate controllability of the system. That is, any initial state can be driven arbitrarily close to any target state in the Hilbert space of the free particle with a predetermined final position of the box. To this purpose we consider weak solutions of the Schrödinger equation and use a stability theorem for the time-dependent Schrödinger equation.
\end{abstract}

\blfootnote{2020 \textit{Mathematics Subject Classification}. 81Q93, 35Q41, 35R37, 35J10.}
\vspace{-0.5cm}

\section{Introduction}

The recent developments in quantum technology foster the demand for a thorough theoretical study of the controllability properties of quantum systems. This attracts the interest of both mathematicians and physicists. The theory of quantum control of finite dimensional quantum systems has achieved a great maturity and has been used successfully in the development of quantum technologies~\cite{d2021introduction}. These developments are partly due to the well established theory of geometric control~\cite{agrachev2004control,jurdjevic1997geometric}. However, many quantum systems relevant for the applications are infinite-dimensional in nature and the use of finite-dimensional control techniques on them leads unavoidably to the introduction of truncation errors. Moreover, infinite-dimensional quantum systems allow for new types of control systems that would not be possible in finite dimension~\cite{ibort2009quantum,balmaseda2019quantum,perez2015boundary}

The results of finite-dimensional quantum control cannot be carried over straightforwardly to infinite-dimensional systems~\cite{mirrahimi2004} and new approaches are necessary. Even the notions of controllability have to be revised in the infinite-dimensional setting. For instance, the negative results~\cite{ball1982,turinici2000} show that exact controllability in the complete space of quantum states is not possible. Two possible ways to avoid these obstructions have been found. One of them is to look for solutions of the control problem in regular dense subspaces of the Hilbert space. For instance, one can achieve local exact controllability of the one-dimensional Schrödinger equation by an electric field if one considers only states in higher order Sobolev spaces~\cite{beauchard2005,morancey2014,morancey2015}. The non-linear Schrödinger equation is also locally exactly controllable on regular dense subspaces~\cite{beauchard2010}. This approach has also been used to obtain controllability results in situations where the base manifold for the Schrödinger operator is of dimension higher than one, but these are more scarce and limited~\cite{beauchard2009,moyano2017}. 

Another option is to give up exact controllability and look for approximate controllability, that is, the possibility of driving any initial state in a neighborhood of any target state with the desired precision. In this way one can obtain controllability results in larger domains. A successful and general approach is the one developed during the last decade ~\cite{chambrion2009controllability,boscain2012weak,boussaid2013weakly} in which approximate controllability is proven under mild assumptions for bilinear quantum control systems. There, techniques from geometric control theory have been extended to the infinite-dimensional case. These results can be applied to quantum control systems whose Schrödinger operator is defined over base manifolds of any dimension, for instance to the control of molecules. This latter approach has the drawback that only piecewise constant controls are admitted and there are interesting applications for which they are not suitable~\cite{ervedoza2009}. In particular, situations in which the Hamiltonian is unbounded and with time-dependent domain, as will be considered here, are not well-posed if the controls are piecewise constant. 

In this article we address the problem of controlling the state of a quantum particle confined in a one-dimensional box by moving the walls of the box. We shall take the second approach described above and prove that this quantum control system is approximately controllable, that is, any initial state can be driven arbitrarily close to any target state, cf.\ Theorem~\ref{thm:main}. This problem has been considered previously in the literature~\cite{munier1981,band2002, carrasco2017controlling,duffin2019controlling} and local exact controllability results have been obtained already. In~\cite{rouchon2003control,beauchard2006controllability} it is proven that one can achieve local exact controllability between sufficiently small regular neighborhoods of the eigenstates by rigidly moving the box, i.e.\ changing the position of the box but maintaining a fixed length. It was proven also under similar assumptions~\cite{coron2006} that control cannot be achieved in small times. Local exact controllability around eigenstates by only moving one wall was proven in~\cite{beauchard2008} and positive results for the non-linear Schrödinger equation have been obtained more recently in~\cite{beauchard2015}.

In the previous cases a suitable change of variables is used that maps the problem with varying domain into another equivalent problem with fixed domain. In this article we shall also use a convenient but different change of coordinates, introduced in~\cite{dimartino2013quantum}, that allows us to consider more general movement of the box. The situations with rigid movement of the box and with one fixed wall and the other moving are recovered as particular cases. In addition, we can also treat the case of a purely stretching or shrinking box. Approximate controllability is proven in this case as well, but only with sectors of fixed parity.

The novelty with respect to previous approaches is that we are able to prove global approximate controllability and target any state of the system, not just neighborhoods of the eigenstates. We consider more general movement of the walls, but global approximate controllability is also obtained for the particular case of one wall fixed, thus improving previous results in the literature. As a side result we have obtained approximate controllability results for interactions that are different from electric fields, namely the dilation operator. This is achieved by considering weak solutions of the Schrödinger equation, which allows us to use a stability theorem, Theorem~\ref{thm:abstract2}, that we can use to extend the results in~\cite{chambrion2009controllability,boscain2012weak} to admit controls that are not piecewise constant. 

This work is organized as follows. The main results of the work are presented and discussed in Section~\ref{sec:main}. In Section~\ref{sec:prelimin} we gather some known results about the existence of unitary propagators generated by time-dependent Hamiltonians and their stability properties, along with known abstract results on the approximate controllability of such systems, and use them to show existence of solutions of the Schrödinger equation associated with a quantum particle in a box with moving walls. Finally, Section~\ref{sec:result} is devoted to the proof of approximate controllability. Some concluding remarks and outlooks are collected in Section~\ref{sec:conclusions}.

\section{Preliminaries and main results}\label{sec:main}

Throughout this work, the symbol $\hilb$ will denote a complex separable Hilbert space, with associated scalar product $\Braket{\cdot,\cdot}$ antilinear at the left, and norm $\|\Psi\|=\sqrt{\Braket{\Psi,\Psi}}$; the domain of an unbounded linear operator $A$ on $\hilb$ will be denoted by $\mathcal{D}(A)$. The adjoint of $A$ is denoted by $A^\dag$.	Finally, given a real interval $I\subset\mathbb{R}$ and an integer $p\in\mathbb{N}$, the space of functions $f:I\rightarrow\mathbb{C}$ that admit $d$ continuous derivatives will be denoted by $\mathrm{C}^d(I)$, while the space of functions $f:I\rightarrow\mathbb{C}$ that admit $d$ \textit{piecewise} continuous derivatives will be denoted by $\mathrm{C}^d_{\rm p}(I)$; that is, $f\in \mathrm{C}^d_{\mathrm{p}}(I)$ if there exists a finite partition of the interval $I=\sqcup_{i=1}^n I_i$ such that the restriction of $f$ to the interior of each subinterval is $d$ times boundedly continuously differentiable; $f|_{\dot{I}_i}=\mathrm{C}^d(\dot{I}_i)$ and $\smash{\sup_{x\in\dot{I}_i} \{f^{(k)}(x)\}}<\infty$, $k=0,\dots,d$.

\subsection{Dynamics of a particle in a moving box}

Let us start by revising some abstract notions.
\begin{definition}\label{def:unitary}
	Let $I\subseteq\mathbb{R}$ be a compact real interval. A \textit{unitary propagator} is a collection $\{U(t,s)\}_{t,s\in I}$ of operators on $\hilb$ such that
	\begin{itemize}
		\item[(i)] for all $t,s\in I$, $U(t,s)$ is unitary;
		\item[(ii)] for all $t,s,r\in I$, $U(t,s)U(s,r)=U(t,r)$;
		\item[(iii)] the function $(t,s)\in I\times I\mapsto U(t,s)$ is jointly strongly continuous.
	\end{itemize}
\end{definition}
Notice that (i) and (ii) imply $U(t,s)^{-1}=U(s,t)$ and $U(t,t)=\mathbb{I}$ for all $t,s$.
\begin{definition}
	A \textit{time-dependent Hamiltonian} on $\hilb$ is a family of densely defined self-adjoint operators $\{H(t):\mathcal{D}(H(t))\rightarrow\hilb\,|\,t\in I \subset \mathbb{R}\}$. 
\end{definition}
We remark that, in general, $\mathcal{D}(H(t))\subset\hilb$ may depend non-trivially on time. Hereafter, with an abuse of notation, we shall indicate such objects by $U(t,s)$, $t,s\in I$, and $H(t)$, $t\in I$, whenever no ambiguities arise from this choice.
\begin{definition}
	Let $H(t)$, $t\in I$, be a time-dependent Hamiltonian. 
	Consider the equation
	\begin{equation}\label{eq:schro0}
		\ii\frac{\mathrm{d}}{\mathrm{d}t}\Psi(t)=H(t)\Psi(t).
	\end{equation}
	We say that a unitary propagator $U(t,s)$, $t,s\in I$, is a \textit{(strong) solution of the Schrödinger equation} if, for all $\Psi_0\in\mathcal{D}(H(s))$, the function $t\in I\mapsto\Psi(t):=U(t,s)\Psi_0$ solves Eq.~\eqref{eq:schro0} with initial condition $\Psi(s)=\Psi_0$.
\end{definition}
In the time-independent case, i.e.\ $H(t)\equiv H$, Stone's Theorem ensures that $U(t,s)=\e^{-\ii (t-s)H}$ is a strong solution of the Schrödinger equation. The situation in the time-dependent case, when the domains of the operators may change with time, is much more involved. In particular, one needs to take into account the highly non-trivial condition $U(t,s)\mathcal{D}(H(s))=\mathcal{D}(H(t))$, without which Eq.~\eqref{eq:schro0} is ill-defined. Sufficient conditions for the existence of strong solutions can be found in~\cite{Kisynski1964,Simon1971,balmaseda2021controllability,balmaseda2021schrodinger,balmaseda2021thesis}. A typical class of time-dependent Hamiltonians, which is particularly important for the scope of this work, is given by Hamiltonians of the form
\begin{equation}\label{eq:sum}
	H(t)=\sum_{i=0}^\nu f_i(t)H_i,
\end{equation}
with $\{H_0,H_1,\dots,H_\nu\}$ being a collection of symmetric operators, and $\{f_0,f_1,\dots,f_\nu\}$ a collection of real-valued functions on a compact time interval $I\subset\mathbb{R}$. Under the assumptions given in Theorem~\ref{thm:abstract2}, which in particular involve the time-independence of the \textit{form domain}, the time-dependent Hamiltonian of Eq.~\eqref{eq:sum} admits a unique solution satisfying useful stability properties, cf.~\cite{balmaseda2021controllability} and Theorem~\ref{thm:abstract2}.

In many cases of practical interest, it suffices to consider solutions of the Schrödinger equation in the weak sense:
\begin{definition}\label{def:weaksolution}
	Let $H(t)$, $t\in I$, be a uniformly semibounded time-dependent Hamiltonian with constant form domain $\hilb^+$,
	and let $\Phi\in\hilb^+$. Consider the equation	
	\begin{equation}\label{eq:schro_weak}
		\ii\frac{\mathrm{d}}{\mathrm{d}t}\Braket{\Phi,\Psi(t)}=h_t\left(\Phi,\Psi(t)\right),
	\end{equation}
	with $h_t(\cdot,\cdot)$ being the sesquilinear form uniquely associated with $H(t)$. We say that a unitary propagator $U(t,s)$, $t,s\in I$, is a \textit{weak solution of the Schrödinger equation} if, for all $\Psi_0\in\hilb^+$, the function $t\in I\mapsto \Psi(t):=U(t,s)\Psi_0$ solves Eq.~\eqref{eq:schro_weak} with initial condition $\Psi(0)=\Psi_0$ for all $\Phi\in\hilb^+$.
\end{definition}

For quantum control purposes, we often need to further relax our requests by taking into account propagators solving Eq.~\eqref{eq:schro_weak} for all but finitely many values of $t$, that is, admitting finitely many time singularities, see e.g.~\cite{boscain2012weak,boussaid2013weakly,chambrion2009controllability}. This leads us to the following definition:
\begin{definition}\label{def:admissible}
		Let $H(t)$, $t\in I$, be defined as above. We say that a unitary propagator $U(t,s)$, $t,s\in I$, is an \textit{admissible solution of the Schrödinger equation} if, for all $\Psi_0\in\hilb^+$, there exist $t_0<t_1<\dots<t_d\in I$ and a family of weak solutions of the Schrödinger equation $\{U_i(t,s) \mid t,s \in (t_{i-1},t_i)\}_{i=1,\dots,d}$ such that  for $t\in(t_{i-1},t_i)$, $s\in(t_{j-1},t_j)$, $1\leq j<i \leq d$ the unitary propagator $U(t,s)$ can be expressed as
		\begin{equation}
			U(t,s)=U_i(t,t_{i-1})U_{i-1}(t_{i-1},t_{i-2})\cdots U_j(t_{j},s).
		\end{equation}
	\end{definition}
Any strong solution of the Schrödinger equation is also, \textit{a fortiori}, a weak solution; however, the converse is not true. Indeed, $\Psi(t)$ solving Eq.~\eqref{eq:schro_weak} does not require the condition $U(t,s)\mathcal{D}(H(s))=\mathcal{D}(H(t))$, but merely the much weaker condition $U(t,s)\hilb^+=\hilb^+$. Besides, obviously a weak solution of the Schrödinger equation is also an admissible solution, and the converse is not true.

We shall now introduce the main setting of this work: a particle in a one-dimensional moving box. Consider a (non-relativistic) massive particle confined in a time-varying one-dimensional box with perfectly rigid walls. Without loss of generality, we will set $m=1/2$. In a fixed reference frame, the length and center position of the box will be described respectively by two real-valued functions $t\mapsto\ell(t),d(t)$, with $\ell(t)>0$. This means that the box corresponds to the interval
\begin{equation}
	\Omega_{\ell(t),d(t)}=\left[d(t)-\frac{\ell(t)}{2},d(t)+\frac{\ell(t)}{2}\right].
\end{equation}
The (time-dependent) Hilbert space of this setting is $L^2\bigl(\Omega_{\ell(t),d(t)}\bigr)$, and the Hamiltonian $H_{\ell(t),d(t)}$ associated with the system is the unbounded operator defined by
\begin{eqnarray}\label{eq:domham}
	\mathcal{D}\bigl(H_{\ell(t),d(t)}\bigr)&=&\mathcal{D}_{\rm Dir}\bigl(\Omega_{\ell(t),d(t)}\bigr) := \mathrm{H}^1_0\bigl(\Omega_{\ell(t),d(t)}\bigr)\cap\mathrm{H}^2\bigl(\Omega_{\ell(t),d(t)}\bigr);\\
	\label{eq:ham}
	H_{\ell(t),d(t)}&=&-\frac{\mathrm{d}^2}{\mathrm{d}x^2}.
\end{eqnarray}
At each instant of time the domain can be identified with the set of all functions in the second Sobolev space $\mathrm{H}^2(\Omega_{\ell(t),d(t)})$ that vanish at the boundary. 

In this scenario, the Hilbert space of the theory is time-dependent; strictly speaking, this makes the corresponding Schrödinger equation ill-defined. However, as discussed in~\cite{dimartino2013quantum}, this difficulty can be circumvented by embedding $L^2(\Omega_{\ell(t),d(t)})$ into the (time-independent) Hilbert space $L^2(\mathbb{R})\simeq L^2\bigl(\Omega_{\ell(t),d(t)}\bigr)\oplus L^2\bigl(\Omega^{\rm c}_{\ell(t),d(t)}\bigr)$, with $\Omega_{\ell(t),d(t)}^{\rm c}$ being the complement of $\Omega_{\ell(t),d(t)}$, and taking into account the Schrödinger equation generated by the Hamiltonian $H_{\ell(t),d(t)}\oplus I$, i.e. acting trivially on the orthogonal complement. With this choice, the dynamics on the orthogonal complement is considered to be trivial and plays no role in the discussion that follows: the controllability properties and results are determined completely by the projection onto the subspaces $L^2\bigl(\Omega_{\ell(t),d(t)}\bigr)$ and from now on we will consider only the restricted dynamics. We refer to~\cite{dimartino2013quantum} for further details.
	
With this caveat in mind, given a pair of functions $t\in \mathbb{R_+}\mapsto\ell(t),d(t)$, with $\ell(0)=\ell_0$ and $d(0)=d_0$, let us consider the control system~\refsigma{}  determined by the Schrödinger equation associated with the time-dependent Hamiltonian $H_{\ell(t),d(t)}$:
\begin{equation}\label{eq:schro}
	(\Sigma)\;
	\begin{cases}
		\ii\dot\Phi(t)=H_{\ell(t),d(t)}\Phi(t),& \Phi(t)\in \mathcal{D}_{\rm Dir}\bigl(\Omega_{\ell(t),d(t)}\bigr)\\
		\Phi(0)=\Phi_0,&\Phi_0\in\mathcal{D}_{\rm Dir}\left(\Omega_{\ell_0,d_0}\right).
	\end{cases}
\end{equation}
This is the equation governing the evolution of a quantum particle in the moving box $\Omega_{\ell(t),d(t)}$, assuming that at the initial time $t=0$ the wavefunction describing the particle is $\Phi_0$.

In this article we study the conditions under which the system~\refsigma{} admits a unique solution for the given initial condition, and also study its controllability. That is, we will determine the conditions for the existence of a unitary propagator (cf.\ Def.~\ref{def:unitary}) that solves Eq.~\eqref{eq:schro}, and for the existence of control functions $t\mapsto\ell(t),d(t)$ such that the system is approximately controllable. We refer to the next section for further details.

Following the construction in~\cite{dimartino2013quantum} it is possible to transform the system~\refsigma{} into a system with a fixed domain which is in the form~\eqref{eq:sum}.
As a starting point, for any $0<\ell\in\mathbb{R}$ and $d\in\mathbb{R}$, consider the unitary transformation
\begin{equation}\label{eq:w}
	W_{\ell,d}:L^2\left(\Omega_{\ell,d}\right)\rightarrow L^2\left(\Omega_{1,0}\right),\qquad \left(W_{\ell,d}\Phi\right)(x)=\sqrt{\ell}\,\Phi(\ell x+d).
\end{equation}
This is the unitary operator implementing a dilation $x\to x'=\ell x$ followed by a translation $x'\to x''=x'+d$. Hereafter, we shall set $\Omega\equiv\Omega_{1,0}$. In addition, one has
\begin{eqnarray}\label{eq:regularity}
	W_{\ell,d}\mathrm{H}^1\left(\Omega_{\ell,d}\right)=\mathrm{H}^1\left(\Omega\right),\qquad 	W_{\ell,d}\mathrm{H}^1_0\left(\Omega_{\ell,d}\right)=\mathrm{H}^1_0\left(\Omega\right),\qquad
	W_{\ell,d}\mathrm{H}^2\left(\Omega_{\ell,d}\right)=\mathrm{H}^2\left(\Omega\right).
\end{eqnarray}
By means of such a transformation it can be shown that the Schrödinger problem generated by the operator (with time-dependent domain) $H_{\ell(t),d(t)}$ can be solved by first solving the one generated by the auxiliary Hamiltonian $\tilde{H}_{\ell(t),d(t)}$ defined by
\begin{eqnarray}\label{eq:domtransf}
	\tilde{H}_{\ell(t),d(t)}&=&\frac{1}{\ell(t)^2}\Delta_{\rm Dir}-\frac{\dot\ell(t)}{\ell(t)}x\circ p-\frac{\dot d(t)}{\ell(t)}p
\end{eqnarray}
with time-independent domain $\mathcal{D}\bigl(\tilde{H}_{\ell(t),d(t)}\bigr) = \mathcal{D}_{\rm Dir}\bigl(\Omega\bigr)$, 
where $\Delta_{\rm Dir}$ is the (nonnegative) Dirichlet Laplacian on the unit interval, and $p$ and $x$ are the operators defined by	$\left(p\Psi\right)(x)=-\ii\Psi'(x)$, $\left(x\Psi\right)(x)=x\Psi(x)$ 
and $x\circ p:=\frac{1}{2}\left(xp+px\right)$. We can define now the auxiliary system~\reftildesigma{} determined by the auxiliary Hamiltonian:	
\begin{equation}\label{eq:schro2}
	(\tilde{\Sigma})\;
	\begin{cases}
		\ii\dot\Phi(t)=\tilde{H}_{\ell(t),d(t)}\Phi(t), &\Phi(t)\in \mathcal{D}_{\rm Dir}\bigl(\Omega\bigr)\\
		\Phi(0)=\Phi_0, &\Phi_0\in\mathcal{D}_{\rm Dir}\left(\Omega\right).
	\end{cases}
\end{equation}
	
The following result, which will be proven in Section~\ref{sec:prelimin}, holds as a consequence of some abstract results about Hamiltonians in the form of Eq.~\eqref{eq:sum}:
\begin{proposition}\label{prop:transform}
	Let $I\subset\mathbb{R}$ be a compact interval, and let $t\in I\mapsto\ell(t),d(t)\in\mathbb{R}$ two functions in $\mathrm{C}^2(I)$ with $\ell(t)>0$ for all $t\in I$. Then the time-dependent Hamiltonian $\tilde{H}_{\ell(t),d(t)}$, cf.\ Eqs.~\eqref{eq:domtransf}, is self-adjoint, semibounded from below uniformly in $t$, and the system~\reftildesigma{} has a unique weak solution $\tilde{U}(t,s)$, $t,s\in I$.
\end{proposition}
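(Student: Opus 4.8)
The plan is to recognize that $\tilde H_{\ell(t),d(t)}$ is precisely of the form~\eqref{eq:sum} and then to invoke Theorem~\ref{thm:abstract2}. Reading off~\eqref{eq:domtransf}, I would set $H_0=\Delta_{\rm Dir}$, $H_1=x\circ p$ and $H_2=p$, with coefficients $f_0(t)=\ell(t)^{-2}$, $f_1(t)=-\dot\ell(t)/\ell(t)$ and $f_2(t)=-\dot d(t)/\ell(t)$. Each generator is symmetric on $\mathcal{D}_{\rm Dir}(\Omega)$: $\Delta_{\rm Dir}$ is nonnegative self-adjoint, $p$ is symmetric there since the boundary terms arising in the integration by parts vanish on $\mathrm{H}^1_0(\Omega)$, and $x\circ p$ is symmetric by construction. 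The decisive structural fact is that the reference interval $\Omega=\Omega_{1,0}$ is fixed, so the form domain $\mathrm{H}^1_0(\Omega)$ is the same for all $t$; this is exactly the constant-form-domain hypothesis required by Theorem~\ref{thm:abstract2}.

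Next I would verify the regularity and uniform bounds of the coefficients. Since $\ell,d\in\mathrm{C}^2(I)$ with $\ell>0$ on the compact interval $I$, the function $\ell$ attains a strictly positive minimum and a finite maximum, so $1/\ell$ is bounded with bounded derivatives; consequently $f_0\in\mathrm{C}^2(I)$, $f_1,f_2\in\mathrm{C}^1(I)$, all three coefficients together with their derivatives are uniformly bounded on $I$, and $f_0$ is bounded below by a positive constant. This furnishes the coefficient regularity demanded by the abstract theorem.

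The technical core is to control $H_1$ and $H_2$ as small perturbations of the leading term $f_0(t)\Delta_{\rm Dir}$, uniformly in $t$. On the unit interval $p^2=\Delta_{\rm Dir}$, so $\|p\Psi\|^2=\Braket{\Psi,\Delta_{\rm Dir}\Psi}\le\|\Psi\|\,\|\Delta_{\rm Dir}\Psi\|$, and Young's inequality yields, for every $\varepsilon>0$, $\|p\Psi\|\le\varepsilon\|\Delta_{\rm Dir}\Psi\|+C_\varepsilon\|\Psi\|$; thus $p$ is infinitesimally operator-bounded relative to $\Delta_{\rm Dir}$. For $x\circ p$ I would use $|x|\le 1/2$ on $\Omega$, giving $\|xp\Psi\|\le\tfrac12\|p\Psi\|$, together with $[p,x]=-\ii$, which gives $\|px\Psi\|\le\|xp\Psi\|+\|\Psi\|$; combining these yields $\|x\circ p\,\Psi\|\le\tfrac12\|p\Psi\|+\tfrac12\|\Psi\|$, so $x\circ p$ too is infinitesimally bounded relative to $\Delta_{\rm Dir}$. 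Since $f_0$ is bounded below by a positive constant while $f_1,f_2$ are bounded on $I$, I can choose $\varepsilon$ small enough that the combined perturbation $f_1(t)H_1+f_2(t)H_2$ has relative bound strictly less than one with respect to $f_0(t)\Delta_{\rm Dir}$ \emph{simultaneously} for all $t\in I$; the Kato--Rellich theorem then delivers self-adjointness on $\mathcal{D}_{\rm Dir}(\Omega)$ for each $t$ together with a lower bound independent of $t$, i.e.\ uniform semiboundedness.

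With symmetric generators, a time-independent form domain, sufficiently regular coefficients, and uniformly relatively small perturbations all established, Theorem~\ref{thm:abstract2} applies and produces the unique weak solution $\tilde U(t,s)$, $t,s\in I$. The step I expect to be the main obstacle is precisely the uniformity in $t$ of the relative bounds: it is not enough that each perturbation be Kato-small at fixed $t$, since the constants must be taken uniform over the compact interval, which is where the strict positivity of $\min_I\ell$ and the continuity of $\dot\ell,\dot d$ are indispensable. A secondary point requiring care is checking that $x\circ p$ genuinely maps $\mathcal{D}_{\rm Dir}(\Omega)$ into $L^2(\Omega)$ and is symmetric there, so that the decomposition~\eqref{eq:sum} is legitimate.
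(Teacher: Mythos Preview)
Your proposal is correct and follows essentially the same route as the paper: write $\tilde H_{\ell(t),d(t)}$ in the form~\eqref{eq:sum}, verify that $p$ and $x\circ p$ are infinitesimally small relative to $\Delta_{\rm Dir}$, check the coefficient regularity coming from $\ell,d\in\mathrm{C}^2(I)$ and $\ell>0$, and invoke Theorem~\ref{thm:abstract2}(b). The only minor difference is that you establish infinitesimal \emph{operator} bounds and appeal to Kato--Rellich, whereas the paper works with infinitesimal \emph{form} bounds (Lemma~\ref{lemma:relbounded}) and KLMN, which is precisely what Hypothesis~\ref{hyp} and the abstract machinery require; since operator bounds imply form bounds this is harmless, and in fact your concern about uniformity in $t$ is already handled by Lemma~\ref{lemma:abstract} once Hypothesis~\ref{hyp} and the coefficient bounds~\eqref{eq:bounds1} are in place.
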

As a consequence:
\begin{corollary}\label{coroll:transform}
	Let $I\subset\mathbb{R}$, $t\in I\mapsto\ell(t),d(t)\in\mathbb{R}$ be as in Prop.~\ref{prop:transform}. Let $\tilde{U}(t,s)$, $t,s\in I$ be the unique weak solution of~\reftildesigma. Then, the unitary propagator defined by
	\begin{equation}\label{eq:unitaries}
		U(t,s):=W^\dag_{\ell(t),d(t)}\tilde{U}(t,s)W_{\ell(s),d(s)},\quad t,s\in I,
	\end{equation}
	is a weak solution of the system~\refsigma.
\end{corollary}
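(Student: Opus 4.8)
The plan is to exploit that $U(t,s)$ is, by construction, the unitary conjugate of the auxiliary propagator $\tilde U(t,s)$ furnished by Proposition~\ref{prop:transform}, and to transport the weak formulation of~\reftildesigma{} back through the transformation $W_t:=W_{\ell(t),d(t)}$ of Eq.~\eqref{eq:w}. First I would verify that $U(t,s)$ is a unitary propagator in the sense of Definition~\ref{def:unitary}. Unitarity is immediate, since each of $W^\dag_t$, $\tilde U(t,s)$, $W_s$ is unitary. The composition law $U(t,s)U(s,r)=U(t,r)$ follows from $W_sW^\dag_s=\mathbb{I}$ together with the analogous property of $\tilde U$. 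Joint strong continuity follows from the joint strong continuity of $\tilde U(t,s)$ (Proposition~\ref{prop:transform}) and the strong continuity of $t\mapsto W_t$, which holds because $\ell,d\in\mathrm{C}^2(I)$ are continuous and $W_{\ell,d}$ is an explicit dilation--translation; a standard $\varepsilon/3$ argument using $\|W_t\|=1$ then yields continuity of the composition.

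The core of the proof is the weak equation. The key algebraic input, underlying the very definition of Eq.~\eqref{eq:domtransf} in~\cite{dimartino2013quantum}, is the relation
\begin{equation*}
	\tilde H_{\ell(t),d(t)}=W_t\,H_{\ell(t),d(t)}\,W^\dag_t+\ii\,\dot W_t W^\dag_t,
\end{equation*}
where the dilation turns $-\mathrm{d}^2/\mathrm{d}x^2$ into $\ell(t)^{-2}\Delta_{\rm Dir}$ and the explicit time-dependence of $W_t$ produces exactly the first-order terms $-\frac{\dot\ell}{\ell}x\circ p-\frac{\dot d}{\ell}p$. I would recast this identity at the level of the associated sesquilinear forms. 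Fixing $\Phi_0$ in the form domain and writing $\Psi(t):=U(t,s)\Phi_0$ and $\tilde\Psi(t):=W_t\Psi(t)=\tilde U(t,s)W_s\Phi_0$, I would compute the derivative of $\Braket{\Phi,\Psi(t)}=\Braket{W_t\Phi,\tilde\Psi(t)}$ by the difference-quotient splitting
\begin{equation*}
	\frac{\mathrm{d}}{\mathrm{d}t}\Braket{W_t\Phi,\tilde\Psi(t)}=\lim_{h\to0}\frac1h\Braket{(W_{t+h}-W_t)\Phi,\tilde\Psi(t+h)}+\lim_{h\to0}\frac1h\Braket{W_t\Phi,\tilde\Psi(t+h)-\tilde\Psi(t)}.
\end{equation*}
The first term converges to $\Braket{\dot W_t\Phi,\tilde\Psi(t)}$ by strong differentiability of $t\mapsto W_t\Phi$ on the form domain (again from $\ell,d\in\mathrm{C}^2$) and strong continuity of $\tilde\Psi$; the second term is a derivative at the \emph{fixed} test vector $W_t\Phi\in\mathrm{H}^1_0(\Omega)$, which the weak equation~\eqref{eq:schro_weak} for $\tilde U$ evaluates as $\tfrac{1}{\ii}\,\tilde h_t(W_t\Phi,\tilde\Psi(t))$.

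Substituting the form identity into $\tilde h_t(W_t\Phi,\tilde\Psi(t))$ splits it into the transported Dirichlet form --- which, by unitary invariance under $W_t$, equals $h_t(\Phi,\Psi(t))$, the form of the original system~\refsigma{} --- plus the contribution of $\ii\dot W_t W^\dag_t$. The latter cancels exactly against the $\ii\Braket{\dot W_t\Phi,\tilde\Psi(t)}$ term: differentiating $W_tW^\dag_t=\mathbb{I}$ shows that $\dot W_t W^\dag_t$ is skew-adjoint (equivalently, $\ii\dot W_t W^\dag_t$ is self-adjoint, consistently with the self-adjointness of $\tilde H_{\ell(t),d(t)}$ asserted in Proposition~\ref{prop:transform}). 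What remains is precisely $\ii\frac{\mathrm{d}}{\mathrm{d}t}\Braket{\Phi,\Psi(t)}=h_t(\Phi,\Psi(t))$, establishing that $U(t,s)$ is a weak solution of~\refsigma.

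I expect the main obstacle to be carrying out this computation rigorously at the level of forms rather than operators --- since $\tilde\Psi(t)$ lies only in the form domain, not in $\mathcal{D}(\tilde H_{\ell(t),d(t)})$ --- and tracking the time-dependent form domain $\mathrm{H}^1_0(\Omega_{\ell(t),d(t)})=W^\dag_t\mathrm{H}^1_0(\Omega)$ of the admissible test vectors, so as to guarantee that every pairing above is well defined and that the two first-order contributions genuinely cancel. The regularity $\ell,d\in\mathrm{C}^2(I)$ is what makes $t\mapsto W_t$ strongly $\mathrm{C}^1$ on the form domain and thereby renders the whole differentiation licit.
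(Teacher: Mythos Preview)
Your proposal is correct and follows the same approach as the paper, which simply invokes the direct comparison between the two Schr\"odinger equations carried out in~\cite{dimartino2013quantum} together with Proposition~\ref{prop:transform}. You have, in effect, spelled out the form-level details of that comparison that the paper defers entirely to the reference.
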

\begin{proof}
	The result follows from a direct comparison between the Schrödinger equations~\eqref{eq:schro} and~\eqref{eq:schro2} reported in~\cite{dimartino2013quantum}, and from Prop.~\ref{prop:transform}.
\end{proof}

\begin{remark}\label{rem:admissible}
	As an immediate consequence of Prop.~\ref{prop:transform} and its corollary, the following statement holds: given $t\in I\mapsto\ell(t),d(t)$ two functions in $\mathrm{C}^2_{\rm p}(I)$, both time-dependent Hamiltonians $\tilde{H}_{\ell(t),d(t)}$ and $H_{\ell(t),d(t)}$, $t\in I$ are self-adjoint, semibounded from below uniformly in $t$, and determine respectively unitary propagators $\tilde{U}(t,s)$, $U(t,s)$ which are \textit{admissible} solutions of their corresponding Schrödinger equations in the sense of Def.~\ref{def:admissible}.
\end{remark}

\subsection{Approximate controllability of a particle in a moving box} 

A \textit{quantum control system} is the dynamical system determined by the Schrödinger equation generated by a family of Hamiltonians $\{H_{c}\,|\,c\in\mathcal{C}\}$, with $\mathcal{C}$ being a suitable space of parameters (controls). A typical example is provided by a bilinear quantum control system, defined by a family of Hamiltonians $\{H_0+cV\,|\,c\in\mathcal{C}\subset\mathbb{R}\}$, where $H_0$ is a densely defined, self-adjoint operator on $\hilb$ with domain $\mathcal{D}(H_0)$, $V$ is a symmetric operator with domain $\mathcal{D}(V)\supseteq\mathcal{D}(H_0)$, and for all $c\in\mathcal{C}$ the operator defined by
\begin{equation}
	H_c=H_0+cV,\qquad\mathcal{D}(H_c)=\mathcal{D}(H_0),
\end{equation}
is self-adjoint on $\hilb$. In particular, if $V$ is infinitesimally form bounded with respect to $H_0$, cf.\ Def.~\ref{def:infform}, then the last point always holds as a straightforward consequence of the Kato--Lions--Lax--Milgram--Nelson (KLMN) theorem. $H_0$ is referred to as the \textit{drift Hamiltonian}, and it is responsible for the free or uncontrolled dynamics, while the operator $V$ represents an \textit{interaction} whose strength is modulated by the parameter. A typical example is a quantum particle subjected to an external field, e.g.\ an electric field. 

A system is approximately controllable if, by properly choosing a control function $f:[0,T]\mapsto\mathcal{C}$, the solution of the system generated by the time-dependent Hamiltonian $H_{f(t)}$ exists and drives the system from any initial state $\Phi_0$ arbitrarily close to any target state $\Phi_1$. The control function will depend on the initial and target state as well as on the desired precision.

Let us come back to the main setting of the work. Corollary~\ref{coroll:transform} ensures us that the time-dependent Hamiltonian associated with a quantum particle in a moving box determines a unitary propagator solving the Schrödinger equation for arbitrary motions of the walls, with the corresponding unitary propagator being expressed, cf.\ Eq.~\eqref{eq:unitaries}, by the unitary propagator of a problem with fixed domain. A question that arises naturally is whether it is possible to \textit{control} such a system. The affirmative result to this question is the main result of this work:

\begin{theorem}\label{thm:main}
	Let $\ell_0,\ell_1>0$, $d_0,d_1\in\mathbb{R}$, $r>0$ and $\Omega_{\ell_0,d_0}$, $\Omega_{\ell_1,d_1}\subset\mathbb{R}$ compact intervals. Take $\Delta d = d_1-d_0$ and $\Delta \ell = \ell_1-\ell_0$. Then:		
	\begin{itemize}
		\item[(i)] If $\Delta d\cdot\Delta\ell\neq0$ or $\Delta d=\Delta\ell=0$; for all $\epsilon>0$, $\Phi_0\in L^2(\Omega_{\ell_0,d_0})$, $\Phi_1\in L^2(\Omega_{\ell_1,d_1})$ with $\|\Phi_0\|=\|\Phi_1\|$, there exist $T>0$, $\delta\neq0$ and a piecewise linear function $f:[0,T]\rightarrow\mathbb{R}$, with $|f(t)|<\ell_0$, $|\dot f(t)|<r$ a.e., $f(0)=0$, $f(T)=\Delta \ell$, and such that the system~\refsigma{} with
		\begin{equation}\label{eq:motion1}
			\ell(t)=\ell_0 + f(t)\quad\text{and}\quad d(t) = d_0 +\delta f(t)
		\end{equation}
		has a unique admissible solution $U_f(t,s)$; $t,s\in[0,T]$ that satisfies
		\begin{equation}\label{eq:approxcontrol}
			\left\|\Phi_1-{U}_f(T,0)\Phi_0\right\|<\epsilon;
		\end{equation}
		\item[(ii)] If $\Delta d=0$; for all $\epsilon>0$, $\Phi_0\in L_{\pm}^2(\Omega_{\ell_0,d_0})$, $\Phi_1\in L^2_{\pm}(\Omega_{\ell_1,d_0})$ with $\|\Phi_0\|=\|\Phi_1\|$ and with the same parity, there exist $T>0$ and a piecewise linear function $f:[0,T]\rightarrow\mathbb{R}$, with $|f(t)|<\ell_0$, $|\dot f(t)|<r$ a.e., $f(0)=0$, $f(T)=\Delta \ell$, and such that the system~\refsigma{} with
		\begin{equation}\label{eq:motion2}
			\ell(t)=\ell_0 + f(t)\quad\text{and}\quad d(t) = d_0
		\end{equation}
		has a unique admissible solution $U_f(t,s)$; $t,s\in[0,T]$ that satisfies Eq.~\eqref{eq:approxcontrol}.
	\end{itemize}
\end{theorem}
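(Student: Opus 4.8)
The plan is to transfer the problem to the fixed-domain auxiliary system \reftildesigma{} and, after a time reparametrization, recognize it as a bilinear quantum control system to which the abstract approximate-controllability results of~\cite{chambrion2009controllability,boscain2012weak}---extended to non-piecewise-constant controls via the stability Theorem~\ref{thm:abstract2}---apply. First I would invoke Corollary~\ref{coroll:transform}: writing $U_f(T,0)=W^\dag_{\ell_1,d_1}\tilde U_f(T,0)W_{\ell_0,d_0}$ with each $W$ unitary, the estimate~\eqref{eq:approxcontrol} is equivalent to $\|\tilde U_f(T,0)\,W_{\ell_0,d_0}\Phi_0-W_{\ell_1,d_1}\Phi_1\|<\epsilon$. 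Since $W$ preserves the norm (whence the hypothesis $\|\Phi_0\|=\|\Phi_1\|$) and its image exhausts $L^2(\Omega)$, it suffices to prove approximate controllability of \reftildesigma{} on $L^2(\Omega)$.

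With the parametrization~\eqref{eq:motion1} I would set $\delta=\Delta d/\Delta\ell$ when $\Delta\ell\neq0$ (so that $d(T)=d_1$ is automatic) and $\delta\neq0$ arbitrary in the trivial case $\Delta d=\Delta\ell=0$, while $\delta=0$ for case~(ii); then $\dot\ell=\dot f$, $\dot d=\delta\dot f$, and~\eqref{eq:domtransf} collapses to $\tilde H=\ell^{-2}\Delta_{\rm Dir}-(\dot f/\ell)\,V_\delta$ with the single interaction $V_\delta:=x\circ p+\delta p$. Introducing the reparametrized time $\tau(t)=\int_0^t\ell(s)^{-2}\,\mathrm{d}s$ turns~\eqref{eq:schro2} into $\ii\,\mathrm{d}\psi/\mathrm{d}\tau=(\Delta_{\rm Dir}-g(\tau)V_\delta)\psi$, a genuine bilinear system with fixed drift $H_0:=\Delta_{\rm Dir}$ (spectrum $\{n^2\pi^2\}_{n\ge1}$, sine/cosine eigenbasis $\{\phi_n\}$) and scalar control $g=\ell\dot\ell$. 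A piecewise-linear $f$ yields a control $g$ close to piecewise constant in the slow-motion regime, and Theorem~\ref{thm:abstract2} ensures the induced propagator is close to the idealized one and is an admissible solution in the sense of Definition~\ref{def:admissible}.

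I would then check the spectral hypotheses of the abstract theorem by computing the matrix elements of the two pieces of $V_\delta$: integration by parts shows that $\langle\phi_m,p\phi_n\rangle$ is nonzero exactly for opposite-parity pairs, whereas $\langle\phi_m,(x\circ p)\phi_n\rangle$ is nonzero only for same-parity pairs (both operators respect the parity structure of the well). Consequently, for $\delta\neq0$ the coupling graph of $V_\delta$ is connected over all of $L^2(\Omega)$---the $x\circ p$ entries connecting each parity class internally and the $p$ entries, switched on by $\delta\neq0$, bridging the two classes---giving full approximate controllability and case~(i); for $\delta=0$ the graph disconnects into the even and odd sectors, yielding controllability only within each $L^2_\pm$ and hence the parity restriction of case~(ii).

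The main obstacle is the non-resonance requirement of the controllability theorem: the gaps $\lambda_m-\lambda_n=(m^2-n^2)\pi^2$ display coincidences (e.g.\ $8^2-7^2=4^2-1^2$), so the naive consecutive-level chain is degenerate. I would therefore exhibit an explicit spanning connectedness chain and invoke the \emph{weak} spectral condition of~\cite{boscain2012weak}---an inductively checkable non-resonance along an ordered chain---rather than full non-degeneracy of all gaps, verifying it from the computed couplings. Finally I would pull the abstract (near-)piecewise-constant control back through $\tau\mapsto t$ to a piecewise-linear $f$, enlarging the horizon $T$ and shrinking the slopes so that $|\dot f|<r$ a.e., $f(0)=0$, $f(T)=\Delta\ell$, and $\ell(t)=\ell_0+f(t)>0$ hold; Corollary~\ref{coroll:transform} and Remark~\ref{rem:admissible} then identify $U_f$ as the unique admissible solution satisfying~\eqref{eq:approxcontrol}.
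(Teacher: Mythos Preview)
Your overall architecture matches the paper's: reduce to the fixed-domain system~\reftildesigma{} via the unitaries $W_{\ell,d}$, recognize an underlying bilinear structure with drift $\Delta_{\rm Dir}$ and interaction $V_\delta=x\circ p+\delta p$, invoke the abstract controllability theorem of~\cite{boscain2012weak} for piecewise constant controls, and then use the stability Theorem~\ref{thm:abstract2} to pass to piecewise linear $f$. The parity analysis and the choice of $\delta$ are exactly as in the paper.

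The genuine gap is your handling of non-resonance. You correctly flag the obstacle (your example $8^2-7^2=4^2-1^2=15$ is even simpler than the paper's), but your proposed fix---``exhibit an explicit spanning connectedness chain and invoke the weak spectral condition''---is not carried out and is unlikely to succeed as stated. When $\lambda>0$ and $\delta\neq0$, \emph{every} off-diagonal matrix element $\langle\varphi_m,V_\delta\varphi_n\rangle$ is nonzero (one of the two pieces of $V_\delta$ always couples), so the non-resonance condition of Definition~\ref{def:chain} forces each chosen gap $s_2^2-s_1^2$ to be uniquely representable as a difference of two positive squares; this is a genuine number-theoretic constraint that no simple chain satisfies, and you give no candidate. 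The condition in~\cite[Theorem~2.6]{boscain2012weak} that the paper actually uses is precisely Definition~\ref{def:chain}; there is no weaker version invoked.

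The paper resolves this differently: shift the control by a small constant $\eta$, so the drift becomes $\Delta_{\rm Dir}+\eta V^{\lambda,\delta}$ (Prop.~\ref{rem:controlperturb} shows this preserves controllability). The eigenvalues of this analytic family are computed to second order; the $j$-dependent correction $\lambda^2/(8\pi^2j^2)$ breaks every resonance of the unperturbed spectrum, and a Baire-category argument over the countably many resonance functions (Lemma~\ref{lemma:nonresonant}) produces an $\eta$ for which a non-resonant connectedness chain exists. This perturbation-plus-analyticity step is the technical heart of the proof, and your proposal has no substitute for it.

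Two smaller points. Your time reparametrization $\tau=\int\ell^{-2}$ is correct but unnecessary: the paper simply freezes $f\equiv0$ in the coefficients while keeping $\dot f\mapsto v$ as an independent control, obtaining the bilinear auxiliary Hamiltonian $\hat H_v^{\lambda,\delta}=\ell_0^{-2}\Delta_{\rm Dir}-\ell_0^{-1}v\,V^{\lambda,\delta}$ and comparing it to the true $\tilde H_{f_n}^{\lambda,\delta}$ via stability; this avoids having to invert the reparametrization while keeping $f$ piecewise linear. Second, you do not explain how the endpoint constraint $f(T)=\Delta\ell$ is met; the paper treats this separately (Corollary~\ref{prop:piecewise2}) by appending a short segment with $f\equiv a$ constant and using strong continuity of the resulting free evolution.
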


The space $L^2_{+}(\Omega_{l,d})$ denotes the closed subspace of $L^2(\Omega_{l,d})$ with positive parity and equivalently for $L^2_{-}(\Omega_{l,d})$. Theorem~\ref{thm:main} covers two cases, see Fig.~\ref{fig:walls}.
\begin{itemize}
	\item We can drive the system between two arbitrary states confined in regions with different lengths and different center positions, without symmetry limitations. In order to achieve that, both $\ell(t)$ and $d(t)$ will be time-dependent and modulated by the control function $f(t)$ according to Eq.~\eqref{eq:motion1}. This includes the particular case of one fixed wall and the other one moving, a relevant and practical scenario.
	\item We can drive the system between two arbitrary states confined in regions with different lengths and same center position $d_0$, provided that a selection rule is satisfied: the two states must have definite, and equal, parity. In order to achieve that, only $\ell(t)$ will be time-dependent according to Eq.~\eqref{eq:motion2}, that is, the cavity must undergo a \textit{pure dilation} motion: the two walls must move symmetrically with respect to each other.
\end{itemize}
\begin{figure}[h!]
	\centering
	\begin{subfigure}[t]{0.48\textwidth}\centering	
		\includegraphics[width=\linewidth]{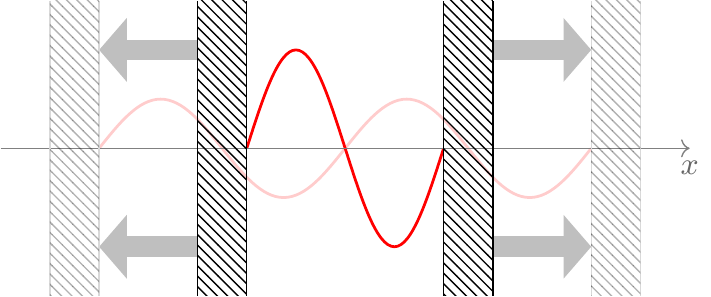}\caption{\centering Two moving walls, $d(t)=\text{const.}$ and \hbox{$\ell(t)\neq\text{const.}$}}\label{fig:walls_a}
	\end{subfigure}\hspace{0.05\linewidth}
	\begin{subfigure}[t]{0.41\textwidth}\centering
		\includegraphics[width=\linewidth]{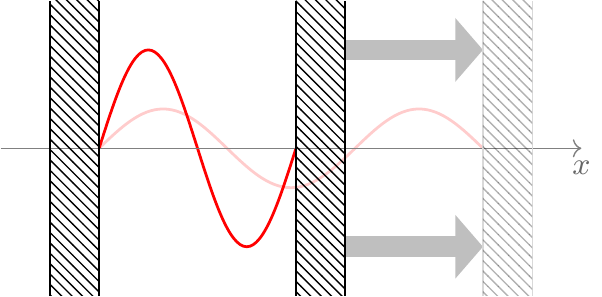}\caption{Single moving wall.}\label{fig:walls_b}
	\end{subfigure}
	\caption{Two particular cases covered by Theorem~\ref{thm:main}. (a) A purely dilating cavity, to which Theorem~\ref{thm:main}(ii) applies: approximate controllability between states with the same parity holds. (b) 
	A translating and dilating cavity, to which Theorem~\ref{thm:main}(i) applies: approximate controllability between arbitrary states holds.}
	\label{fig:walls}
\end{figure}

The proof of Theorem~\ref{thm:main}, to which the entirety of Section~\ref{sec:result} is devoted, proceeds in three steps:
\begin{itemize}
	\item[(a)] As a first step, an auxiliary control system related with the transformation $W_{\ell,d}$ in Eq.~\eqref{eq:w} is proven to be approximately controllable by piecewise constant functions, cf.\ Theorem~\ref{thm:auxiliary};
	\item[(b)] The stability results of Section~\ref{sec:prelimin} are used to prove approximate controllability with piecewise differentiable controls of the auxiliary control problem defined by the Hamiltonian $\tilde{H}_{\ell(t),d(t)}$ as defined in Eq.~\eqref{eq:domtransf};	
	\item[(c)] The time-dependent transformation $W_{\ell(t),d(t)}$ is used to prove approximate controllability of the control problem defined by the Dirichlet Laplacian in a box with moving walls.
\end{itemize}
In the next section we will introduce some mathematical preliminaries needed to prove the theorems.

\section{Existence of solutions and stability}\label{sec:prelimin}

In this section we introduce an important stability result, Theorem~\ref{thm:abstract2}, about the dynamics induced by families of time-dependent operators in the form
\begin{equation}\label{eq:sum2}
	H_n(t)=\sum_{i=0}^\nu f_{n,i}(t)H_i.
\end{equation}
with $\{H_0,H_1,\dots,H_\nu\}$ being a collection of symmetric operators on the Hilbert space $\hilb$, and with $\{f_{n,0}\}_{n\in\textbf{N}},...,\{f_{n,\nu}\}_{n\in\textbf{N}}$ sequences of measurable real-valued functions on a compact real interval $I$, where $\textbf{N}\subset\mathbb{N}$. Such results will be first applied to prove Prop.~\ref{prop:transform}, i.e.\ existence of weak solutions of the system~\reftildesigma{}, and then in Section~\ref{sec:result} to obtain the main result of this work.

Let us start with the following definition.
\begin{definition}\label{def:infform}
	Let $H_0,V$ be two symmetric operators on $\hilb$, with $\mathcal{D}(H_0)\subseteq\mathcal{D}(V)$. We say that $V$ is \textit{infinitesimally form bounded} with respect to $H_0$ if for all $\epsilon>0$ there exists $b(\epsilon)\geq0$ such that, for all $\Psi\in\mathcal{D}(H_0)$,
	\begin{equation}
		\left|\Braket{\Psi,V\Psi}\right|\leq\epsilon\Braket{\Psi,H_0\Psi}+b(\epsilon)\|\Psi\|^2.
	\end{equation}
\end{definition}
\begin{hypothesis}\label{hyp}
	We assume that:
	\begin{itemize}
		\item $H_0$ is a non-negative self-adjoint operator, with domain $\mathcal{D}(H_0)$ and form domain $\hilb^+$;
		\item $\{H_1,\dots,H_\nu\}$ is a collection of symmetric operators, with domain $\mathcal{D}(H_i)\supset\mathcal{D}(H_0)$, that are infinitesimally form bounded with respect to $H_0$.
	\end{itemize}
\end{hypothesis}
\begin{lemma}\label{lemma:kappa}
	Let $\{H_0,H_1,\dots,H_\nu\}$ be a collection of operators satisfying Hypothesis~\ref{hyp}. Define the norms
	\begin{equation}\label{eq:plusminus}
		\|\Psi\|_{\pm}=\left\|(H_0+1)^{\pm1/2}\Psi\right\|.
	\end{equation}
	Then there exists $K>0$ such that, for all $\Psi\in\hilb^+$,
	\begin{equation}\label{eq:kappa1}
		\left|\Braket{\Psi,H_i\Psi}\right|\leq K\|\Psi\|^2_+,\qquad i=0,1,\dots,\nu;
	\end{equation}
\end{lemma}
\begin{proof}
	Since for all $i=1,\dots,\nu$ $H_i$ is (infinitesimally) form bounded with respect to $H_0$, for all $\epsilon>0$,
	\begin{equation}
		\left|\Braket{\Psi,H_i\Psi}\right|\leq\epsilon\Braket{\Psi,H_0\Psi}+b_i(\epsilon)\|\Psi\|^2,
	\end{equation}
	and the desired inequality holds with
	\begin{equation}
		K=\max\{1,\epsilon,b_1(\epsilon),\dots,b_\nu(\epsilon)\}.\qedhere
	\end{equation}
\end{proof}

\begin{remark}\label{rem:scale}
	The form domain $\hilb^+$, endowed with the norm $\|\cdot\|_+$, is a Hilbert space strictly contained in $\hilb$. The completion of $\hilb$ with respect to the norm $\|\cdot\|_-$ will be denoted by $\hilb^-$; remarkably, the triplet of spaces $(\hilb^+,\hilb,\hilb^-)$ constitutes a scale of Hilbert spaces (or Gel'fand triple), that is,
	\begin{equation}
		\hilb^+\subset\hilb\subset\hilb^-,
	\end{equation}
	each inclusion being dense with respect to the topology of the larger space, and with the three norms satisfying $\|\cdot\|_-\leq\|\cdot\|\leq\|\cdot\|_+$. Among the many properties of such scales, we recall the following: for all $\Phi,\Psi\in\hilb^+$, a Cauchy--Schartz-like inequality holds:
	\begin{equation}\label{eq:cauchyschwartz}
		\left|\Braket{\Psi,\Phi}\right|\leq\|\Psi\|_{-}\|\Phi\|_+.
	\end{equation}
\end{remark}

\begin{lemma}\label{lemma:abstract}
	Let $\{H_0,H_1,\dots,H_\nu\}$ be a collection of symmetric operators satisfying Hypothesis~\ref{hyp}, and let $\{f_{n,0}\}_{n\in\textbf{N}},\dots,\{f_{n,\nu}\}_{n\in\textbf{N}}$ be sequences of measurable real-valued functions on a compact interval $I\subset\mathbb{R}$ such that 
	\begin{eqnarray}\label{eq:bounds0}
		M&:=&\sup\left\{|f_{n,i}(t)|\,|\,t\in I,\:i=0,\dots,\nu,\: n\in\textbf{N}\right\}<\infty,\nonumber\\\mu&:=&\inf\left\{f_{n,0}(t)\,|\:t\in I,\:n\in\textbf{N}\right\}>0.
	\end{eqnarray}
	Define $H_n(t)$ as in Eq.~\eqref{eq:sum2}. Then there exist $\mathcal{D}\left(H_n(t)\right)\subset\hilb$ such that $\{H_n(t)\}_{n\in\mathbf{N}}$ is a family of time-dependent Hamiltonians, with domains $\mathcal{D}\left(H_n(t)\right)$,  self-adjoint, semibounded from below uniformly in $t$ and $n$, and having form domain $\hilb^+$.
\end{lemma}
\begin{proof}
	Let $\epsilon>0$. By assumption, there exist $b_1(\epsilon),\dots,b_\nu(\epsilon)$ such that, for all $i=1,\dots,\nu$ and $\Psi\in\mathcal{D}(H_0)$,
	\begin{equation}
		\left|\Braket{\Psi,H_i\Psi}\right|\leq\frac{\epsilon\mu}{\nu M}\Braket{\Psi,H_0\Psi}+b_i(\epsilon)\|\Psi\|^2,
	\end{equation}
	therefore
	\begin{eqnarray}\label{eq:formb}
		\left|\Braket{\Psi,\sum_{i=1}^\nu f_{n,i}(t)H_i\Psi}\right|&\leq&M\sum_{i=1}^\nu\left|\Braket{\Psi,H_i\Psi}\right|
		\leq\epsilon\mu\Braket{\Psi,H_0\Psi}+Mb(\epsilon)\|\Psi\|^2,
	\end{eqnarray}
	with $b(\epsilon)=\sum_ib_i(\epsilon)$. Therefore, the operator $\sum_{i=1}^\nu f_{n,i}(t)H_i$, defined on $\mathcal{D}(H_0)$, is infinitesimally form bounded with respect to $f_{n,0}(t)H_0$, the latter being non-negative by assumption. By the KLMN theorem, $H_n(t)$ will thus admit a self-adjointness domain $\mathcal{D}(H_n(t))$.
	Besides,
	\begin{eqnarray}
		\Braket{\Psi,H_n(t)\Psi}&\geq&\left(f_{n,0}(t)-\mu\epsilon\right)\Braket{\Psi,H_0\Psi}-Mb(\epsilon)\|\Psi\|^2\nonumber\\
		&\geq&\mu(1-\epsilon)\Braket{\Psi,H_0\Psi}-Mb(\epsilon)\|\Psi\|^2.
	\end{eqnarray}
	This holds for all $\epsilon$. In particular, choosing any $\epsilon<1$, we have
	\begin{equation}
		\Braket{\Psi,H_n(t)\Psi}\geq-Mb(\epsilon)\|\Psi\|^2,
	\end{equation}
	hence $H_n(t)$ is semibounded from below uniformly in $n\in\textbf{N}$.
\end{proof}
The following theorem ensures that, under Hypothesis~\ref{hyp}, a family of Hamiltonians in the form~\eqref{eq:sum2}, with the sequences of coefficients satisfying suitable bounds, yields a family of unitary propagators $U_n(t,s)$ each solving the corresponding Schrödinger equation; in addition, this family is \textit{stable}.
\begin{theorem}\label{thm:abstract2}
	Let $\{H_0,H_1,\dots,H_\nu\}$ be a collection of symmetric operators satisfying Hypothesis~\ref{hyp}, $\textbf{N}\subset\mathbb{N}$, and let \hbox{$\{f_{n,0}\}_{n\in\textbf{N}}$, ..., $\{f_{n,\nu}\}_{n\in\textbf{N}}$} be sequences of measurable real-valued functions on a compact interval $I$ such that
	\begin{eqnarray}\label{eq:bounds1}
		M&:=&\sup\left\{|f_{n,i}(t)|\,\Big|\,t\in I,\:i=0,\dots,\nu,\: n\in\textbf{N}\right\}<\infty,\nonumber\\\mu&:=&\inf\left\{f_{n,0}(t)\,|\:t\in I,\:n\in\textbf{N}\right\}>0.
	\end{eqnarray}
	Then, for all $n\in\textbf{N}$, there exists $\mathcal{D}\left(H_n(t)\right)$ such that the time-dependent operator of Eq.~\eqref{eq:sum2}, with domain $\mathcal{D}\left(H_n(t)\right)$, satisfies the following properties:
	\begin{itemize}
		\item [(a)] Define the norms $\|\cdot\|_{\pm,n,t}$ on $\hilb^{\pm}$ via
		\begin{equation}
			\left\|\Psi\right\|_{\pm,n,t}:=\left\|(H_n(t)+m+1)^{\pm1/2}\Psi\right\|,
		\end{equation}
		with $m$ the uniform lower bound of the operators $H_n(t)$. Then there exists $c\geq1$ such that, for all $n\in\textbf{N}$, $t\in I$, and $\Psi\in\hilb^\pm$,
		\begin{equation}\label{eq:cequiv1}
			c^{-1}\left\|\Psi\right\|_{\pm,n,t}\leq\|\Psi\|_\pm\leq c\left\|\Psi\right\|_{\pm,n,t},
		\end{equation}
		with $\|\cdot\|_\pm$ as in Eq.~\eqref{eq:plusminus}.
		\item [(b)]If, in addition, $\{f_{n,0}, \dots, f_{n,\nu}\}_{n\in\textbf{N}}\subset\mathrm{C}^1(I)$ and
		\begin{equation}\label{eq:bounds1bis}
			\sup\left\{\|\dot{f}_{n,i}\|_{L^1(I)}\,\Big|\,i=0,\dots,\nu,\;\,n\in\textbf{N}\,\right\}<\infty,
		\end{equation}
		then for all $n\in\textbf{N}$ there exists a weak solution $U_n(t,s)$, $t,s\in I$, of the Schrödinger equation generated by $H_n(t)$, cf.\ Definition~\ref{def:weaksolution};
		\item[(c)] If, in addition,  $\{f_{n,0}, \dots, f_{n,\nu}\}_{n\in\textbf{N}}\subset\mathrm{C}^2(I)$ then, for all $n,m\in\textbf{N}$, $t,s\in I$, and $\Psi\in\hilb^+$, we have
		\begin{equation}\label{eq:stability1}
				\left\|\left(U_n(t,s)-U_m(t,s)\right)\Psi\right\|_-\leq L\|\Psi\|_+\sum_{i=0}^\nu\left\|f_{n,i}-f_{m,i}\right\|_{L^1(t,s)},
		\end{equation}
		where
		\begin{equation}\label{eq:l}
			L=c^8\exp\left(
			2c^2K(\nu+1)
			\,\sup\left\{\bigl\|\dot f_{n,i}\bigr\|_{L^1(I)}\,\Big|\,n\in\textbf{N},\:0\leq i\leq \nu\right\}\right),
		\end{equation}
		with $K$ as in Eq.~\eqref{eq:kappa1}, and $c$ as in Eq.~\eqref{eq:cequiv1}.
	\end{itemize}
\end{theorem}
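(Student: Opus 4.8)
The plan is to treat the three claims in turn, since (a) supplies the structural backbone on which (b) and (c) rest. For part (a), I would show that the sesquilinear form of $H_n(t)+m+1$ is uniformly comparable to that of $H_0+1$. The upper bound is immediate from Lemma~\ref{lemma:kappa}: writing $\Braket{\Psi,H_n(t)\Psi}=\sum_{i=0}^\nu f_{n,i}(t)\Braket{\Psi,H_i\Psi}$ and estimating each term by $M K\|\Psi\|_+^2$ yields $\|\Psi\|_{+,n,t}^2\leq\bigl(MK(\nu+1)+m+1\bigr)\|\Psi\|_+^2$. The lower bound is exactly the estimate produced in the proof of Lemma~\ref{lemma:abstract}, which after adding $m+1$ gives $\|\Psi\|_{+,n,t}^2\geq\min\{\mu(1-\epsilon),1\}\,\|\Psi\|_+^2$ for a fixed $\epsilon<1$. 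Both inequalities are uniform in $n$ and $t$ and produce an explicit constant $c$ for the ``$+$'' case; the ``$-$'' case follows by duality, since $\hilb^-$ is the dual of $\hilb^+$ within the Gel'fand triple of Remark~\ref{rem:scale}, so that taking suprema transfers the equivalence with the same $c$.

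For part (b), the extra hypotheses turn $t\mapsto H_n(t)$ into a sufficiently regular curve in $\mathcal{B}(\hilb^+,\hilb^-)$. Indeed, Lemma~\ref{lemma:kappa} together with the Cauchy--Schwarz inequality~\eqref{eq:cauchyschwartz} shows that each $H_i$ extends to a bounded map $\hilb^+\to\hilb^-$ with $\|H_i\|_{\hilb^+\to\hilb^-}\leq K$, whence $H_n\in\mathrm{C}^1(I;\mathcal{B}(\hilb^+,\hilb^-))$ with $\int_I\|\dot H_n(t)\|_{\hilb^+\to\hilb^-}\,\mathrm{d}t\leq K\sum_i\|\dot f_{n,i}\|_{L^1(I)}$, uniformly bounded by~\eqref{eq:bounds1bis}. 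Combined with the constant form domain $\hilb^+$ and the uniform semiboundedness furnished by part (a) and Lemma~\ref{lemma:abstract}, this places us squarely within the hypotheses of the existence theorems for form-based Schrödinger evolutions~\cite{Kisynski1964,Simon1971,balmaseda2021controllability}, which deliver the weak solution $U_n(t,s)$ preserving $\hilb^+$.

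The heart of the argument is part (c). First I would establish the quantitative bound $\|U_n(t,s)\Psi\|_+\leq c^2\exp\bigl(\tfrac12 c^2K(\nu+1)\Lambda\bigr)\|\Psi\|_+$, with $\Lambda:=\sup_{n,i}\|\dot f_{n,i}\|_{L^1(I)}$, together with its dual on $\hilb^-$. The idea is to differentiate $\phi(r):=\|U_n(r,s)\Psi\|_{+,n,r}^2=\Braket{\Psi_r,(H_n(r)+m+1)\Psi_r}$, where $\Psi_r=U_n(r,s)\Psi$: since $\Psi_r$ solves the Schrödinger equation and $H_n(r)+m+1$ commutes with $H_n(r)$, the contributions of $\dot\Psi_r=-\ii H_n(r)\Psi_r$ cancel and only the explicit time dependence survives, $\dot\phi(r)=\sum_i\dot f_{n,i}(r)\Braket{\Psi_r,H_i\Psi_r}$. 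Lemma~\ref{lemma:kappa} and part (a) then give $|\dot\phi(r)|\leq c^2K\,\phi(r)\sum_i|\dot f_{n,i}(r)|$, and Grönwall's inequality closes the estimate. Next I would invoke the variation-of-constants identity, valid in $\hilb^-$,
$\bigl(U_n(t,s)-U_m(t,s)\bigr)\Psi=\ii\int_s^t U_m(t,r)\bigl(H_m(r)-H_n(r)\bigr)U_n(r,s)\Psi\,\mathrm{d}r$,
and bound the integrand factor by factor: $U_n(r,s)$ controlled in $\hilb^+$ by the above, $H_m(r)-H_n(r)=\sum_i\bigl(f_{m,i}(r)-f_{n,i}(r)\bigr)H_i$ mapping $\hilb^+\to\hilb^-$ with norm at most $K\sum_i|f_{m,i}(r)-f_{n,i}(r)|$, and $U_m(t,r)$ controlled in $\hilb^-$ by the dual bound. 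Integrating in $r$ produces the $L^1$ sum on the right-hand side of~\eqref{eq:stability1} and assembles the constant $L$ of~\eqref{eq:l}.

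The subtle points, and what I expect to be the main obstacle, lie in making both the differentiation of $\phi$ and the variation-of-constants formula rigorous at the level of the Gel'fand triple: $\Psi_r$ belongs to $\hilb^+$ but not necessarily to the operator domain, so these manipulations must be read as identities between weak (form) derivatives valued in $\hilb^-$, justified by the regularity secured in (a)--(b). I expect the cancellation of the unitary part in $\dot\phi$---and the uniform $\hilb^\pm$-boundedness of the propagators that it yields---to be the crux; once those exponential bounds are in hand, part (c) reduces to careful bookkeeping of the constants.
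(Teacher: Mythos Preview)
Your proposal is correct and takes the same approach as the paper: part (a) via the two-sided form bound from Lemma~\ref{lemma:abstract} plus duality for the $-$ norm, and parts (b)--(c) by placing the problem within the hypotheses of the existence and stability theorems in~\cite{Kisynski1964,balmaseda2021controllability}. The paper is terser than you for (b) and (c), simply invoking~\cite[Theorem~8.1]{Kisynski1964} and~\cite[Proposition~3.10, Theorem~3.7]{balmaseda2021controllability} directly, whereas your Gr\"onwall/Duhamel sketch unpacks precisely the content of those cited results.
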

\begin{proof}
	(a) The inequality~\eqref{eq:formb} implies, for all $t\in I$ and $n\in\textbf{N}$,
	\begin{equation}
		\left(f_{n,0}(t)-\mu\epsilon\right)\Braket{\Psi,H_0\Psi}-Mb(\epsilon)\|\Psi\|^2\leq\Braket{\Psi,H(t)\Psi}\leq\left(f_{n,0}(t)+\mu\epsilon\right)\Braket{\Psi,H_0\Psi}+Mb(\epsilon)\|\Psi\|^2,
	\end{equation}
	thus, since $\mu<f_{n,0}(t)\leq M$,
	\begin{equation}
		\mu(1-\epsilon)\Braket{\Psi,H_0\Psi}-Mb(\epsilon)\|\Psi\|^2\leq\Braket{\Psi,H(t)\Psi}\leq\left(M+\mu\epsilon\right)\Braket{\Psi,H_0\Psi}+Mb(\epsilon)\|\Psi\|^2.
	\end{equation}
	Fix $\epsilon<1$, define $m:=Mb(\epsilon)$, and add $(m+1)\|\Psi\|^2$ to each term of the inequality above. We get
	\begin{equation}
		\mu(1-\epsilon)\Braket{\Psi,H_0\Psi}+\|\Psi\|^2\leq\|\Psi\|^2_{+,n,t}\leq\left(M+\mu\epsilon\right)\Braket{\Psi,H_0\Psi}+(1+2m)\|\Psi\|^2,
	\end{equation}
	where $\|\Psi\|^2_{+,n,t}=\Braket{\Psi,(H_n(t)+m+1)\Psi}$. Then, defining
	\begin{equation}
		c=\max\left\{M+\mu\epsilon,1+2m,\frac{1}{\mu(1-\epsilon)},1\right\},
	\end{equation}
	we have
	\begin{equation}
		c^{-1}\|\Psi\|^2_+\leq\|\Psi\|^2_{+,n,t}\leq c\|\Psi\|^2_+,
	\end{equation}
	hence the desired equivalence between the norms $\|\cdot\|_+$ and $\|\cdot\|_{+,n,t}$. Recalling that $(\hilb^+,\hilb,\hilb^-)$ is a scale of Hilbert spaces (cf.\ Remark~\ref{rem:scale}), the equivalence between $\|\cdot\|_-$ and $\|\cdot\|_{-,n,t}$ follows consequently, cf.~\cite[Theorem 3]{balmaseda2021schrodinger}.
	
	Lemma~\ref{lemma:kappa}, Lemma~\ref{lemma:abstract}, and the bounds~\eqref{eq:bounds1}--\eqref{eq:bounds1bis} imply the assumptions in~\cite[Theorem 8.1]{Kisynski1964}, from which \textit{(b)} follows, and also those in~\cite[Proposition 3.10]{balmaseda2021controllability} and~\cite[Theorem 3.7]{balmaseda2021controllability} from which \textit{(c)} follows.
\end{proof}
We can now prove Prop.~\ref{prop:transform}, thus showing that the Schrödinger equation generated by the
time-dependent Hamiltonian of a particle in a moving box, system~\refsigma, has admissible solutions, see Remark~\ref{rem:admissible}.

\begin{lemma}\label{lemma:relbounded}
	The operators $p$ and $x\circ p$, defined on $\mathcal{D}_{\rm Dir}(\Omega)$, are infinitesimally form bounded with respect to $\Delta_{\rm Dir}$.
\end{lemma}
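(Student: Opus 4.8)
The plan is to reduce everything to the elementary quadratic-form estimate for the first derivative. On the form domain $\hilb^+=\mathrm{H}^1_0(\Omega)$ the Dirichlet Laplacian acts as $\Braket{\Psi,\Delta_{\rm Dir}\Psi}=\|\Psi'\|^2$, so proving infinitesimal form boundedness of an operator $V$ amounts to controlling $|\Braket{\Psi,V\Psi}|$ by $\epsilon\|\Psi'\|^2+b(\epsilon)\|\Psi\|^2$. The two ingredients I would use repeatedly are the Cauchy--Schwarz inequality, in the form $\int_\Omega|\Psi|\,|\Psi'|\,\mathrm{d}x\le\|\Psi\|\,\|\Psi'\|$, and the Young-type inequality $\|\Psi\|\,\|\Psi'\|\le\epsilon\|\Psi'\|^2+\tfrac{1}{4\epsilon}\|\Psi\|^2$, valid for every $\epsilon>0$. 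Since both $p$ and $x\circ p$ are symmetric on $\mathcal{D}_{\rm Dir}(\Omega)$, their forms are real and it suffices to bound them in modulus; all the integrations by parts below are justified because $\Psi\in\mathcal{D}_{\rm Dir}(\Omega)=\mathrm{H}^1_0(\Omega)\cap\mathrm{H}^2(\Omega)$.

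For $p$, a direct computation gives $\Braket{\Psi,p\Psi}=-\ii\int_\Omega\overline{\Psi}\,\Psi'\,\mathrm{d}x$, whence $|\Braket{\Psi,p\Psi}|\le\|\Psi\|\,\|\Psi'\|$ by Cauchy--Schwarz. Applying the Young-type inequality then yields $|\Braket{\Psi,p\Psi}|\le\epsilon\,\Braket{\Psi,\Delta_{\rm Dir}\Psi}+\tfrac{1}{4\epsilon}\|\Psi\|^2$, which is exactly the claimed bound with $b(\epsilon)=\tfrac{1}{4\epsilon}$; since $\epsilon>0$ is arbitrary, this is the infinitesimal form boundedness of $p$.

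For $x\circ p=\tfrac12(xp+px)$ I would first write out the form explicitly. Computing $\Braket{\Psi,xp\Psi}=-\ii\int_\Omega x\,\overline{\Psi}\,\Psi'\,\mathrm{d}x$ and, using $(x\Psi)'=\Psi+x\Psi'$, $\Braket{\Psi,px\Psi}=-\ii\|\Psi\|^2-\ii\int_\Omega x\,\overline{\Psi}\,\Psi'\,\mathrm{d}x$, the symmetrized form reduces to $\Braket{\Psi,(x\circ p)\Psi}=\Im\int_\Omega x\,\overline{\Psi}\,\Psi'\,\mathrm{d}x$, after noting via integration by parts that $2\Re\int_\Omega x\,\overline{\Psi}\,\Psi'\,\mathrm{d}x=[x|\Psi|^2]_{-1/2}^{1/2}-\|\Psi\|^2=-\|\Psi\|^2$, the boundary term vanishing because $\Psi(\pm\tfrac12)=0$. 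Since $|x|\le\tfrac12$ on $\Omega=[-\tfrac12,\tfrac12]$, Cauchy--Schwarz gives $|\Braket{\Psi,(x\circ p)\Psi}|\le\tfrac12\|\Psi\|\,\|\Psi'\|$, and the Young-type inequality upgrades this to $|\Braket{\Psi,(x\circ p)\Psi}|\le\tfrac{\epsilon}{2}\,\Braket{\Psi,\Delta_{\rm Dir}\Psi}+\tfrac{1}{8\epsilon}\|\Psi\|^2$, establishing the infinitesimal form boundedness of $x\circ p$ as well.

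The estimates are routine; the only point requiring care is the computation of the form of $x\circ p$, where the Dirichlet boundary condition is essential: it is precisely the vanishing of the boundary term $[x|\Psi|^2]_{-1/2}^{1/2}$ that makes the symmetrized form real, and the boundedness $|x|\le\tfrac12$ on the compact interval $\Omega$ that keeps the derivative under control. Both bounds, proven for $\Psi\in\mathcal{D}_{\rm Dir}(\Omega)$, extend to all of $\hilb^+$ by density, since each side is continuous in the $\|\cdot\|_+$ norm.
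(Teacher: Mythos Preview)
Your proof is correct and follows essentially the same approach as the paper's: both use $\Braket{\Psi,\Delta_{\rm Dir}\Psi}=\|\Psi'\|^2$, Cauchy--Schwarz, and Young's inequality with $\epsilon$ to handle $p$, and then the boundedness of the multiplication operator $x$ on $\Omega$ to treat $x\circ p$ the same way. The only difference is that you carry out the form computation for $x\circ p$ explicitly (and correctly), whereas the paper simply remarks that the argument ``is proven similarly having into account that $x$ is a bounded operator.''
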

\begin{proof}
	Indeed, noticing that $\|\Psi'\|^2=\Braket{\Psi,\Delta_{\rm Dir}\Psi}$ for all $\Psi\in\mathcal{D}_{\rm Dir}(\Omega)$, and using Young's inequality with $\epsilon$, we get
	\begin{equation}
		\left|\braket{\Psi,p\Psi}\right|\leq\epsilon\Braket{\Psi,\Delta_{\rm Dir}\Psi}+\frac{1}{4\epsilon}\|\Psi\|^2.
	\end{equation}
	The case for $x\circ p$ is proven similarly having into account that $x$ is a bounded operator.
\end{proof}

\begin{proof}[Proof of Prop.~\ref{prop:transform}]
	The operator $\Delta_{\rm Dir}$, i.e.\ the Dirichlet Laplacian with domain $\mathcal{D}(\Delta_{\rm Dir})=\mathcal{D}_{\rm Dir}(\Omega)$, is self-adjoint and non-negative. Since $p$ and $x\circ p$ are infinitesimally form bounded with respect to $\Delta_{\rm Dir}$, by the KLMN theorem the operators satisfy Hyp.~\ref{hyp}. Besides, since $t\mapsto\ell(t),d(t)$ are in $\mathrm{C}^2(I)$ by assumption (and thus $t\mapsto\dot{d}(t),\dot\ell(t)$ are in $\mathrm{C}^1(I)$), and $\ell(t)>0$, the functions
	\begin{equation}
		t\in I\mapsto\frac{1}{\ell(t)^2},\;\frac{\dot{d}(t)}{\ell(t)},\;\frac{\dot{\ell}(t)}{\ell(t)}
	\end{equation}
	are all in $\mathrm{C}^1(I)$. The bounds of Eq.~\eqref{eq:bounds1} are then satisfied. Therefore, Theorem~\ref{thm:abstract2} applies, and Part \textit{(b)} provides the result.
\end{proof}
We stress that, as pointed out in~\cite{dimartino2013quantum} as well, a similar result would be obtained replacing Dirichlet boundary conditions with other boundary conditions that are dilation-invariant, such as Neumann boundary conditions. Besides, the fact that such bounds are infinitesimal is crucial: this guarantees that self-adjointness and semiboundedness hold for arbitrary values of $\ell(t)$, $d(t)$, and their derivatives, as opposed to what would happen if they were relatively bounded with strictly positive relative bound.

\section{Proving controllability by moving walls}\label{sec:result}

The first step towards the proof of Theorem~\ref{thm:main} will be to reduce the degrees of freedom of the motion of the box. Let $\delta\in\mathbb{R},\lambda\geq0$, and consider two functions $t\mapsto d(t),\ell(t)$ in the form
\begin{eqnarray}
	d(t)&=&d_0+\delta\,f(t);\label{eq:dft}\\
	\ell(t)&=&\ell_0+\lambda\,f(t),\label{eq:lft}
\end{eqnarray}
for some measurable real-valued function $t\mapsto f(t)$, satisfying $f(0)=0$ and (if $\lambda>0$) $|f(t)|<\ell_0/\lambda$. This entails considering a box whose walls lie at positions $x_-(t)<x_+(t)$, $t\in\mathbb{R}$, given by
\begin{eqnarray}
	x_-(t)&=&\left(d_0-\frac{\ell_0}{2}\right)+\left(\delta-\frac{\lambda}{2}\right)f(t);\\
	x_+(t)&=&\left(d_0+\frac{\ell_0}{2}\right)+\left(\delta+\frac{\lambda}{2}\right)f(t).
\end{eqnarray}
Different values of the parameters $\lambda,\delta$ correspond (up to a global multiplicative constant, which can be reabsorbed into $f(t)$) to different kinds of motions of the walls, including:
\begin{itemize}
	\item A purely dilating box whose center lies at the fixed position $d(t)=d_0$ for $\delta=0$ (Fig.~\ref{fig:walls_a}).
	\item A box with a single moving wall for $\delta=\pm \lambda$ (Fig.~\ref{fig:walls_b}).
 	\item A purely translating box with fixed length $\ell(t)=\ell_0$ for $\lambda=0$.
\end{itemize}
The particular cases of the Hamiltonians $H_{\ell(t),d(t)}$ and $\tilde{H}_{\ell(t),d(t)}$ for $d(t),\ell(t)$ given by Eqs.~\eqref{eq:dft}--\eqref{eq:lft} will be denoted by $H^{\lambda,\delta}_{f(t)}$ and $\tilde{H}^{\lambda,\delta}_{f(t)}$ respectively. From Eq.~\eqref{eq:domtransf} we get
\begin{equation}\label{eq:transf2}
	\tilde{H}^{\lambda,\delta}_{f(t)}=\frac{1}{\left[\ell_0+\lambda f(t)\right]^2}\Delta_{\rm Dir}-\frac{\dot f(t)}{\ell_0+\lambda f(t)}V^{\lambda,\delta},
\end{equation}
where
\begin{equation}
	V^{\lambda,\delta}=\lambda\,x\circ p+\delta\,p.
\end{equation}
Their corresponding unitary propagators will be denoted by $U^{\lambda,\delta}_f(t,s)$ and $\tilde{U}^{\lambda,\delta}_f(t,s)$. Notice that, as a direct consequence of Corollary~\ref{coroll:transform} and Eqs.~\eqref{eq:dft}--\eqref{eq:lft}, they are related by
\begin{equation}\label{eq:unitaries2}
	U^{\lambda,\delta}_f(t,s)=W^\dag_{\ell_0+\lambda f(t),d_0+\delta f(t)}\tilde{U}^{\lambda,\delta}_f(t,s)W_{\ell_0+\lambda f(s),d_0+\delta f(s)}.
\end{equation}
The remainder of this section is devoted to proving Theorem~\ref{thm:main}. We will rely on a result for bilinear quantum control systems, given in~\cite{chambrion2009controllability,boscain2012weak}. To this purpose, there are two main technical obstacles to overcome, namely:
\begin{enumerate}
	\item The Hamiltonian $\tilde{H}^{\lambda,\delta}_{f(t)}$ does not define a bilinear control system, because of the time-dependence of the drift term and the dependence on both $f(t)$ and its derivative $\dot f(t)$;
	\item The quantum control system~\reftildesigma{} defined by the Hamiltonians $\tilde{H}^{\lambda,\delta}_{f(t)}$ is equivalent to the quantum control system~\refsigma{} only if
	\begin{itemize}
		\item $f$ is sufficiently regular so that Corollary~\ref{coroll:transform} can be applied.
		\item the values of the control function $f(t)$ at the initial and final time are known beforehand.
	\end{itemize}
\end{enumerate}
Both obstacles will be overcome via a stability argument based on Theorem~\ref{thm:abstract2}. In order to apply the result of controllability on bilinear quantum control systems, we will need to verify for our auxiliary control problem the existence of a non-resonant connectedness chain. This is defined next.
\begin{definition}\label{def:chain}
	Let $H_0$ be a self-adjoint operator on $\hilb$ with compact resolvent, its spectrum being $\{E_j\}_{j\in\mathbb{N}}$, and with $\{\varphi_j\}_{j\in\mathbb{N}}\subset\hilb$ being a complete orthonormal set of eigenvectors of $H_0$. Let $S\subset\mathbb{N}^2$. Then $S$ is a \textit{connectedness chain} for the quantum control system $H_0+uV,\,u\in(-r,r)$ if the following conditions hold: for all $(j,\ell)\in\mathbb{N}^2$, there is a finite sequence
	\begin{equation}
		(j,r_1),\;(r_1,r_2),\;(r_2,r_3),\;\ldots,\;(r_{k-1},r_k),\;(r_{k},\ell)\in S
	\end{equation}
	such that
	\begin{equation}
		\Braket{\varphi_j,V\varphi_{r_1}},\;\Braket{\varphi_{r_1},V\varphi_{r_2}},\;\ldots,\;\Braket{\varphi_{r_{k-1}},V\varphi_{r_k}},\;\Braket{\varphi_{r_{k}},V\varphi_{\ell}}\neq0.
	\end{equation}
	Furthermore, $S$ is \textit{non-resonant} if, for all $(s_1,s_2)\in S$ and for all $(t_1,t_2)\in\mathbb{N}^2$ such that $\Braket{\varphi_{t_1},V\varphi_{t_2}}\neq0$, we have
	\begin{equation}
		|E_{s_2}-E_{s_1}|\neq|E_{t_2}-E_{t_1}|,
	\end{equation}
	excluding the trivial cases $(s_1,s_2)=(t_1,t_2)$ and $(s_1,s_2)=(t_2,t_1)$.
\end{definition}
As a particular case, suppose that the following condition holds:
\begin{equation}
	\Braket{\varphi_{j+1},V\varphi_j}\neq0\qquad\forall j\in\mathbb{N}.
\end{equation}
In such a case, the system clearly admits a connectedness chain $S$ given by the family of all couples of consecutive integers,
\begin{equation}\label{eq:consecutive}
	S=\{(s,s+1)\,|\;s\in\mathbb{N}\}\cup\{(s+1,s)\,|\;s\in\mathbb{N}\}.
\end{equation}

\subsection{Step one: an auxiliary control system}

We introduce an auxiliary control system to which known results by Boscain et al., cf.~\cite[Theorem 2.6]{boscain2012weak}, can be applied. Given any measurable function $v(t)$, define
\begin{eqnarray}\label{eq:auxiliary}
	\hat{H}^{\lambda,\delta}_{v}(t)&=&\frac{1}{\ell_0^2}\Delta_{\rm Dir}-\frac{1}{\ell_0}v(t)\,V^{\lambda,\delta},
\end{eqnarray}
which is formally obtained by replacing $\dot f(t)$ with $v(t)$ and setting $f(t)\equiv0$ in $\tilde{H}^{\lambda,\delta}_{f(t)}$, cf.\ Eq.~\eqref{eq:transf2}. This expression defines a bilinear quantum control system. We will show that such a system is approximately controllable for almost all values of $\delta$ and $\lambda$:
\begin{theorem}\label{thm:auxiliary}
	Let $\ell_0>0$, $r>0$, $\delta\in\mathbb{R}$, and $\lambda>0$. Then:
	\begin{itemize}
		\item[(i)] if $\delta\neq0$, the bilinear quantum control system defined by Eq.~\eqref{eq:auxiliary} is approximately controllable in $L^2(\Omega)$ via piecewise constant functions $v:[0,T]\rightarrow(-r,r)$;
		\item[(ii)] if $\delta=0$, defining
		\begin{equation}
			L^2_\pm(\Omega)=\left\{\Psi\in L^2(\Omega)\,|\;\Psi(-x)=\pm\Psi(x)\;\text{a.e.}\right\},
		\end{equation}
		the bilinear quantum control system defined by Eq.~\eqref{eq:auxiliary} is approximately controllable in $L^2_\pm(\Omega)$ via piecewise constant functions $v:[0,T]\rightarrow(-r,r)$.
	\end{itemize}
\end{theorem}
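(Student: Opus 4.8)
The plan is to read \eqref{eq:auxiliary} as a bilinear system $H_0+v(t)V$ with drift $H_0=\ell_0^{-2}\Delta_{\rm Dir}$ and interaction $V=-\ell_0^{-1}V^{\lambda,\delta}=-\ell_0^{-1}(\lambda\,x\circ p+\delta\,p)$, and to exhibit a non-resonant connectedness chain in the sense of Definition~\ref{def:chain} so that \cite[Theorem 2.6]{boscain2012weak} applies. First I would record the spectral data of the drift. On $\Omega=[-\tfrac12,\tfrac12]$ the Dirichlet Laplacian is self-adjoint with compact resolvent, simple eigenvalues $E_n=(n\pi/\ell_0)^2$ and eigenfunctions $\varphi_n(x)=\sqrt2\,\sin\!\big(n\pi(x+\tfrac12)\big)$, $n\ge1$, which carry definite parity under $x\mapsto -x$, alternating with $n$ (even parity for odd $n$, odd parity for even $n$). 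Compactness of the resolvent and the relative form boundedness from Lemma~\ref{lemma:relbounded} put us in the setting required by the cited controllability result, so the whole task reduces to a spectral/combinatorial check.

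The core computation is the matrix of $V$ in this basis. I would evaluate $\Braket{\varphi_j,p\varphi_k}$ and $\Braket{\varphi_j,(x\circ p)\varphi_k}$ by elementary integration after the shift $y=x+\tfrac12$. Since $p$ is odd and $x\circ p$ is even under the reflection (indeed $P(x\circ p)P=x\circ p$ while $PpP=-p$), the parity selection rules follow: $\Braket{\varphi_j,p\varphi_k}\neq0$ precisely when $j+k$ is odd, and $\Braket{\varphi_j,(x\circ p)\varphi_k}\neq0$ precisely when $j+k$ is even and $j\neq k$; in both cases the value is a rational multiple of $jk/(j^2-k^2)$. Hence $V^{\lambda,\delta}$ couples opposite-parity levels through its $\delta\,p$ part and same-parity levels through its $\lambda\,(x\circ p)$ part.

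For part (i), with $\delta\neq0$ the first rule gives $\Braket{\varphi_{n+1},V\varphi_n}\neq0$ for every $n$, so the couples of consecutive integers form a connectedness chain on all of $L^2(\Omega)$ exactly as in \eqref{eq:consecutive}. For part (ii), with $\delta=0$ the interaction $V=-\ell_0^{-1}\lambda\,(x\circ p)$ commutes with the reflection and therefore leaves each $L^2_\pm(\Omega)$ invariant; restricting to a sector, whose eigenbasis consists of the $\varphi_n$ with $n$ of fixed parity, the rule $\Braket{\varphi_{n+2},V\varphi_n}\neq0$ shows that consecutive same-parity levels form a connectedness chain within $L^2_\pm(\Omega)$. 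In both settings the transition frequencies along the chosen chain are pairwise distinct: they equal $(\pi/\ell_0)^2(2n+1)$ in case (i) and are proportional to $8a$ on the even-parity sector and to $4(2a+1)$ on the odd-parity sector in case (ii).

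The delicate point, which I expect to be the real obstacle, is non-resonance, because the quadratic spectrum $E_n\propto n^2$ is highly resonant: its gaps satisfy many coincidences $m^2-k^2=m'^2-k'^2$ (for instance $8^2-7^2=4^2-1^2$), so a priori the frequency of a chain transition could coincide with that of another coupled transition. Since the chain frequencies are themselves pairwise distinct, the only tension is with transitions outside the chain, and here the parity selection rules carry the argument: an odd chain-gap $2n+1$ in case (i) can only meet the gap of an opposite-parity ($\delta\,p$-type) transition, and inside a fixed parity sector in case (ii) only same-parity transitions compete, so the verification reduces to controlling the factorizations of $m^2-k^2=(m-k)(m+k)$. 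This is precisely the step where the exact hypotheses of \cite[Theorem 2.6]{boscain2012weak} must be used, and it also explains why case (ii) is necessarily confined to sectors of fixed parity, the $x\circ p$ coupling being parity-preserving. Once the non-resonant connectedness chain is secured, \cite[Theorem 2.6]{boscain2012weak} yields approximate controllability by piecewise constant $v:[0,T]\to(-r,r)$ on $L^2(\Omega)$ in case (i) and on each $L^2_\pm(\Omega)$ in case (ii), which is the claim.
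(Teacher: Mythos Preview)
Your setup is correct up to and including the identification of the connectedness chains, but the non-resonance step is not a detail you can leave to ``controlling the factorizations of $m^2-k^2$'': it actually \emph{fails}. In case (i), take the chain element $(220,221)$, with gap $221^2-220^2=441$; the pair $(20,29)$ has $29^2-20^2=441$ as well, and since $20+29$ is odd it is coupled by the $\delta\,p$ part of $V^{\lambda,\delta}$. Thus the nearest-neighbour chain is \emph{not} non-resonant for $\Delta_{\rm Dir}+uV^{\lambda,\delta}$, and no amount of parity bookkeeping repairs this, because both competing transitions already live in the same ($p$-coupled) class. The same phenomenon occurs in case (ii): within $L^2_+(\Omega)$ the chain transition $(5,7)$ has gap $49-25=24$, while the $x\circ p$-coupled pair $(1,5)$ gives $25-1=24$. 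So the hypotheses of \cite[Theorem~2.6]{boscain2012weak} are simply not met by the unperturbed system, and your argument stops here.

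The paper overcomes this by an entirely different mechanism that you are missing: a perturbative removal of resonances. One observes that $\eta\mapsto\Delta_{\rm Dir}+\eta V^{\lambda,\delta}$ is a self-adjoint holomorphic family of type A with compact resolvent, so the eigenvalues $E_j^{\lambda,\delta}(\eta)$ are real-analytic. A second-order computation gives
\[
\frac{\mathrm{d}^2}{\mathrm{d}\eta^2}E_j^{\lambda,\delta}(0)=\frac{\lambda^2}{8\pi^2 j^2}-\frac{\lambda^2}{48}-\frac{\delta^2}{4},
\]
and one checks that the analytic ``resonance function'' $\eta\mapsto\big(E_{s_2}^{\lambda,\delta}-E_{s_1}^{\lambda,\delta}\big)(\eta)-\big(E_{t_2}^{\lambda,\delta}-E_{t_1}^{\lambda,\delta}\big)(\eta)$ is never identically zero unless $(s_1,s_2)=(t_1,t_2)$; this uses $\lambda>0$ in an essential way (and indeed fails for $\lambda=0$, where the eigenvalues shift uniformly). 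Hence for a dense set of small $\eta$ the perturbed drift $\Delta_{\rm Dir}+\eta V^{\lambda,\delta}$ has a non-resonant chain, and controllability of the perturbed system transfers to the original one by the trivial reparametrisation $u\mapsto u-\eta$ (Prop.~\ref{rem:controlperturb}). That analyticity-plus-shift argument is the heart of the proof; without it, the direct combinatorial route you propose does not go through.
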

In other words, in the case $\lambda>0$ and $\delta\neq0$ (translation+dilation) the system is approximately controllable; in the case $\lambda>0$ and $\delta=0$ (pure dilation) the system is still approximately controllable up to a selection rule: given two states $\Psi_0,\Psi_1$ \textit{with the same, definite parity}, we can always drive each of them arbitrarily close to the other one. States with distinct or indefinite parity cannot be driven close to each other. The reason for that is the following: all terms in the Hamiltonian leave each parity sector invariant, and thus the evolution induced by it cannot couple the two sectors, no matter how the control function is chosen. The case $\lambda=0$, which corresponds to a pure translation, remains an open problem: the method developed in this work cannot be used to prove controllability in this case.

The rest of this subsection will be devoted to prove Theorem~\ref{thm:auxiliary}. Before that, notice that the value of $\ell_0$ is completely immaterial (as long as it is positive), since we can always rescale the Hamiltonian in such a way that its value only affects the definition of the control function $v(t)$. Consequently, in the remainder of this subsection, we will set $\ell_0=1$ without loss of generality and study the bilinear control system with drift Hamiltonian $\Delta_{\rm Dir}$ and interaction determined by $V^{\lambda,\delta}$.

The Hamiltonian $\Delta_{\rm Dir}$ has compact resolvent and a purely discrete, simple spectrum $\{E_j\}_{j\geq1}$, with corresponding normalized eigenvectors $\{\varphi_j\}_{j\geq1}$, given by
\begin{equation}\label{eq:h0eig}
	E_j=j^2\pi^2,\qquad\varphi_j(x)=\sqrt{2}\sin\left[j\pi\left(x+\frac{1}{2}\right)\right].
\end{equation}
Approximate controllability via piecewise constant functions is thus guaranteed by~\cite[Theorem 2.6]{boscain2012weak} provided that a non-resonant connectedness chain $S\subset\mathbb{N}^2$, cf.\ Def.~\ref{def:chain}, can be found. In our case, an immediate computation shows the following result. For all $j\neq\ell$,
\begin{eqnarray}\label{eq:brakets2}
	\Braket{\varphi_{j},p\varphi_\ell}&=&\frac{2j\ell}{\ell^2-j^2}\bigl[1-(-1)^{j+\ell}\bigr]\,(1-\delta_{j\ell}),\\
	\label{eq:brakets1}
	\Braket{\varphi_{j},\left(xp+px\right)\varphi_\ell}&=&\frac{2j\ell}{\ell^2-j^2}\bigl[1+(-1)^{j+\ell}\bigr]\,(1-\delta_{j\ell}),
\end{eqnarray}
that is, the operators $p$ and $x\circ p$ couple respectively eigenvectors whose indices differ by an odd and a non-zero even number. In particular, we have the following special cases:
\begin{itemize}
	\item $V^{0,1}=p$ only couples eigenvectors with different parity; correspondingly, the control system admits a connectedness chain;
	\item $V^{1,0}=x\circ p$ only couples eigenvectors with the same parity; correspondingly, the control system does not admit any connectedness chain.
\end{itemize}
Consequently, for all $\lambda>0$ and $0\neq \delta\in\mathbb{R}$, the control system determined by the Hamiltonian with interaction term $V^{\lambda,\delta}$ admits a connectedness chain $S$. In all those cases, a possible (but not unique) choice for $S$ is the set of all couples of consecutive integers as in Eq.~\eqref{eq:consecutive}. For example, in all cases in which $\lambda$ and $\delta$ are both non-zero, $S=\mathbb{N}^2$ itself is a possible choice for a connectedness chain, since all couples of eigenvectors are directly coupled by $V^{\lambda,\delta}$.

We need to choose a \textit{non-resonant} connectedness chain. However, the control system \textit{does} exhibit resonances. Indeed, given two couples of integers $(s_1,s_2)\in S,\,(t_1,t_2)\in\mathbb{N}^2$, a resonance occurs when
\begin{equation}
	\left|s_2^2-s_1^2\right|=\left|t_2^2-t_1^2\right|.
\end{equation}
Even by choosing the chain $S$ as in Eq.~\eqref{eq:consecutive}, thus constraining $(s_1,s_2)$ to be a couple of consecutive integers, resonances still occur, an example being
\begin{equation}
	(s_1,s_2)=(220,221),\qquad (t_1,t_2)=(20,29).
\end{equation}
This prevents us from directly applying the results in~\cite[Theorem 2.6]{boscain2012weak}; nevertheless, we will show that approximate controllability for the control system does hold. The analyticity of the eigenvalues and eigenvectors as functions of $\lambda$ will allow us to circumvent the resonance problem. This is the content of the following proposition.
\begin{proposition}\label{rem:controlperturb}
	The bilinear quantum control system $H_0+uV$, $u\in(-r,r)$ is approximately controllable if and only if, given an arbitrary $\eta\in(-r,r)$, the bilinear quantum control system $H_0+(\eta+u)V$ with $u\in(-r-\eta,r-\eta)$ is approximately controllable as well.
\end{proposition}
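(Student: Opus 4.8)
The plan is to observe that the two control systems are in fact the \emph{same} system, merely reparametrized by a rigid shift of the control parameter, and that approximate controllability is a property of the family of attainable propagators, which is insensitive to such a reparametrization. Accordingly, I would not attempt any controllability analysis from scratch, but instead set up an explicit correspondence that transports admissible controls (and hence evolutions) from one system to the other.

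First I would record the elementary but crucial fact that the two systems generate exactly the same family of Hamiltonians. For the first system, the attainable Hamiltonians are $\{H_0 + cV \mid c \in (-r,r)\}$. For the second, the control $u$ ranges over $(-r-\eta, r-\eta)$ and the effective coefficient of $V$ is $\eta + u$; as $u$ sweeps $(-r-\eta, r-\eta)$, the quantity $\eta + u$ sweeps precisely $(-r, r)$. Hence both systems possess the identical set of attainable Hamiltonians $\{H_0 + cV \mid c \in (-r,r)\}$.

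Next I would make this correspondence dynamical by exhibiting a bijection between admissible piecewise constant controls. Given a piecewise constant control $u \colon [0,T] \to (-r,r)$ for the first system, set $v(t) := u(t) - \eta$; then $v$ is a piecewise constant function $[0,T] \to (-r-\eta, r-\eta)$, and conversely every admissible control for the second system arises this way via $v \mapsto v + \eta$. On each interval of constancy the instantaneous generator for the first system, $H_0 + u(t)V$, coincides with that for the second, $H_0 + (\eta + v(t))V = H_0 + u(t)V$. Since the propagator associated with a piecewise constant control is, on each system, the ordered product of the exponentials $\e^{-\ii(t_k - t_{k-1})(H_0 + u_k V)}$ over the common time partition, the paired controls $u$ and $v$ produce \emph{identical} unitary propagators, and in particular identical evolutions $U(T,0)\Phi_0$.

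It follows that the reachable set from any initial state $\Phi_0$ at any final time $T$ is the same for the two systems. Approximate controllability, namely that for every $\Phi_0,\Phi_1$ of equal norm and every $\epsilon>0$ there exist $T$ and an admissible control driving $\Phi_0$ to within $\epsilon$ of $\Phi_1$, is therefore equivalent for the two systems, which is the assertion. The argument contains no genuine obstacle; the only points requiring a word of care are that the shift $u \mapsto u - \eta$ maps piecewise constant functions to piecewise constant functions and respects the open control constraints, both of which are immediate, together with the surjectivity supplied by the inverse shift $v \mapsto v + \eta$.
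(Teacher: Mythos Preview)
Your argument is correct and is essentially the same as the paper's: both rest on the single observation that $H_0+f(t)V=H_0+\eta V+(f(t)-\eta)V$, so the shift $f\mapsto f-\eta$ is a bijection between admissible controls for the two systems yielding identical time-dependent Hamiltonians and hence identical propagators. The paper records this in one line, while you spell out the bijection and the consequences for reachable sets more explicitly, but there is no substantive difference in approach.
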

\begin{proof}
	Given $T>0$ and any function $t\in[0,T]\mapsto f(t)\in(-r,r)$, we can write
	\begin{equation}
		H_0+f(t)V=H_0+\eta V+\left(f(t)-\eta\right)V,
	\end{equation}
	whence the claim follows immediately.
\end{proof}
Prop.~\ref{rem:controlperturb} allows us to circumvent the problem of resonances in the following way:
\begin{lemma}\label{lemma:nonresonant}
	Let $\lambda > 0$, $\delta\neq 0$, and let $S\subset\mathbb{N}^2$ be a connectedness chain for the system $\Delta_{\rm Dir}+uV^{\lambda,\delta},\,u\in(-r,r)$. Then, for all $\eta_0>0$, there exists $0<\eta<\eta_0$ such that $S$ is a non-resonant connectedness chain for the control system~$\Delta_{\rm Dir}+(\eta+u)V^{\lambda,\delta}$ with $u\in(-\eta,r-\eta)$.
\end{lemma}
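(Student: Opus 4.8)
The plan is to perturb the drift so that the resonant degeneracies present at $\eta=0$ are lifted analytically, and then to harvest a good value of $\eta$ by a countability argument. Concretely, write the control system of the statement as $\Delta_{\rm Dir}+(\eta+u)V^{\lambda,\delta}=H_0(\eta)+uV^{\lambda,\delta}$ with drift $H_0(\eta):=\Delta_{\rm Dir}+\eta V^{\lambda,\delta}$ (this decomposition is exactly the content of Prop.~\ref{rem:controlperturb}). By Lemma~\ref{lemma:relbounded}, $V^{\lambda,\delta}$ is infinitesimally form bounded with respect to $\Delta_{\rm Dir}$, so $\{H_0(\eta)\}$ is a self-adjoint holomorphic family of type (B) on a complex neighbourhood of $0$, the form domain $\mathrm{H}^1_0(\Omega)$ being constant. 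Since the unperturbed eigenvalues $E_j=j^2\pi^2$ are simple, analytic perturbation theory yields, on a real interval $(-\eta_*,\eta_*)$, simple real-analytic eigenvalues $E_j(\eta)$ and normalised analytic eigenvectors $\varphi_j(\eta)$ reducing to \eqref{eq:h0eig} at $\eta=0$.

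First I would check that the connectedness-chain property survives. For each $(j,\ell)\in S$ the coupling $\eta\mapsto\Braket{\varphi_j(\eta),V^{\lambda,\delta}\varphi_\ell(\eta)}$ is real-analytic; those couplings that are nonzero at $\eta=0$ therefore have only isolated zeros, and as $S$ is countable, the set $B_1\subset(-\eta_*,\eta_*)$ on which one of them vanishes is countable. Since every connecting path used in Definition~\ref{def:chain} consists of such couplings, off $B_1$ the set $S$ remains a connectedness chain. Next, for every quadruple $q=(s_1,s_2,t_1,t_2)$ with $(s_1,s_2)\in S$ and $(t_1,t_2)$ nontrivial, introduce the real-analytic function
\[
h_q(\eta):=\bigl(E_{s_2}(\eta)-E_{s_1}(\eta)\bigr)^2-\bigl(E_{t_2}(\eta)-E_{t_1}(\eta)\bigr)^2,
\]
whose vanishing is precisely the resonance $|E_{s_2}(\eta)-E_{s_1}(\eta)|=|E_{t_2}(\eta)-E_{t_1}(\eta)|$. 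If $h_q\not\equiv0$ its zeros are isolated, so it contributes only countably many bad values; assembling these over all $q$ gives a countable set $B_2$.

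Provided every \emph{relevant} $h_q$ is non-constant, $B_1\cup B_2$ is countable, and I may select any $\eta\in(0,\min\{\eta_0,\eta_*\})\setminus(B_1\cup B_2)$: for this $\eta$ the drift $H_0(\eta)$ makes $S$ a non-resonant connectedness chain for $\Delta_{\rm Dir}+(\eta+u)V^{\lambda,\delta}$, which is the claim. The hard part is establishing that the relevant $h_q$ are not identically zero. Quadruples that are already non-resonant at $\eta=0$, i.e.\ $|s_2^2-s_1^2|\neq|t_2^2-t_1^2|$, satisfy $h_q(0)\neq0$ and are harmless; the only delicate ones are those resonant at $\eta=0$ (such as $(220,221)$ versus $(20,29)$). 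For these I would expand $h_q$ in powers of $\eta$: by the Dirichlet boundary conditions and the explicit form of $p$ and $x\circ p$, the first-order corrections $\Braket{\varphi_j,V^{\lambda,\delta}\varphi_j}$ vanish, so the two gaps agree to first order and the leading obstruction to $h_q\equiv0$ appears at second order, through the difference of the coefficients $c_j=\sum_{k\neq j}|\Braket{\varphi_k,V^{\lambda,\delta}\varphi_j}|^2/(E_j-E_k)$ built from the matrix elements \eqref{eq:brakets2}--\eqref{eq:brakets1}.

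The crux, then, is to show that the analytic perturbation genuinely lifts each degeneracy: that for every resonant quadruple one has $(c_{s_2}-c_{s_1})\neq(c_{t_2}-c_{t_1})$, or more generally that two distinct nontrivial gap functions cannot coincide identically in $\eta$ while the associated transition stays coupled (if a gap coincidence persisted for all $\eta$ but the coupling $\Braket{\varphi_{t_1}(\eta),V^{\lambda,\delta}\varphi_{t_2}(\eta)}$ were identically zero, that $q$ would simply not be relevant and could be discarded). This is the step I expect to require the real work, since it is a statement about infinite sums defining the $c_j$; everything else—the analyticity, the isolation of zeros, and the final countability argument—is soft. Once non-constancy is secured, Prop.~\ref{rem:controlperturb} transports the conclusion back to the stated control system with $u\in(-\eta,r-\eta)$, completing the proof.
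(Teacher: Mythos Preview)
Your strategy is exactly the paper's: perturb the drift to $H_0(\eta)=\Delta_{\rm Dir}+\eta V^{\lambda,\delta}$, invoke analytic perturbation theory for the eigenvalues and eigenvectors, and harvest a good $\eta$ by a countability argument. You have also correctly located the only substantive step---showing that the gap-difference functions $h_q$ are not identically zero for nontrivial resonant quadruples---but you explicitly leave it undone, calling it ``a statement about infinite sums defining the $c_j$'' and ``the step I expect to require the real work.'' As written, then, the proposal is an outline with a gap at its core.

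The missing computation is far less forbidding than you suggest, and this is the one ingredient you lack: the second-order coefficients $c_j$ evaluate in \emph{closed form}. Using \eqref{eq:brakets2}--\eqref{eq:brakets1} and the explicit Dirichlet spectrum, the infinite sums collapse to
\[
\frac{\mathrm{d}^2}{\mathrm{d}\eta^2}E_j(\eta)\Big|_{\eta=0}=\frac{\lambda^2}{8j^2\pi^2}-\frac{\lambda^2}{48}-\frac{\delta^2}{4},
\]
so the only $j$-dependent part is $\lambda^2/(8j^2\pi^2)$. For a quadruple already resonant at $\eta=0$ one has $s_2^2-s_1^2=t_2^2-t_1^2$; the second-order contribution to the gap difference is proportional to $\bigl(s_1^{-2}-s_2^{-2}\bigr)-\bigl(t_1^{-2}-t_2^{-2}\bigr)$, and requiring this to vanish as well forces $s_1s_2=t_1t_2$. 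Elementary algebra (e.g.\ via $(s_2^2-s_1^2)^2+4(s_1s_2)^2=(s_2^2+s_1^2)^2$) then gives $\{s_1,s_2\}=\{t_1,t_2\}$. Thus every nontrivial $h_q$ has a nonvanishing second derivative at $0$, hence $h_q\not\equiv0$, and your countability argument goes through without change. Note this is precisely where $\lambda>0$ enters; for $\lambda=0$ the $j$-dependent term disappears and the degeneracy is not lifted (cf.\ Remark~\ref{rem:lambda0}). Once you have this, your side remark about discarding quadruples with identically vanishing coupling becomes unnecessary: no nontrivial gap coincidence can persist at all.
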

\begin{proof}
	Define, for $\eta\in\mathbb{C}$,
	\begin{equation}\label{eq:hhat}
		\hat{H}^{\lambda,\delta}(\eta)=\Delta_{\rm Dir}+\eta V^{\lambda,\delta}.
	\end{equation}
	By the relative boundedness of $V^{\lambda,\delta}$ with respect to $\Delta_{\rm Dir}$, cf.\ Lemma~\ref{lemma:relbounded}, it follows that this is a self-adjoint holomorphic family of type A, with $\hat{H}^{\lambda,\delta}(0)=\Delta_{\rm Dir}$ having compact resolvent. Consequently, by~\cite[Chapter VII, Theorem 2.4]{kato2013perturbation} and~\cite[Chapter VII, Theorem 3.9]{kato2013perturbation}, $\hat{H}^{\lambda,\delta}(\eta)$ has compact resolvent for all $\eta\in\mathbb{C}$, and there exist real analytic functions
	\begin{equation}
		E_{j}^{\lambda,\delta}:\mathbb{R}\rightarrow\mathbb{R},\qquad\varphi_{j}^{\lambda,\delta}:\mathbb{R}\rightarrow\hilb
	\end{equation}
	such that, for all $\eta\in\mathbb{R}$, $\{E_{j}^{\lambda,\delta}(\eta)\}_{j\geq1}$ is the spectrum of $H^{\lambda,\delta}(\eta)$, and $\{\varphi_{j}^{\lambda,\delta}(\eta)\}_{j\geq1}$ is its associated complete orthonormal family of eigenvectors, with $E_j^{\lambda,\delta}(0)=E_j$, $\varphi_j^{\lambda,\delta}(0)=\varphi_j$ as defined in Eq.~\eqref{eq:h0eig}. The first and second derivatives of both functions at $\eta=0$ can be computed using perturbation theory, see e.g.~\cite{kato2013perturbation}:
	\begin{eqnarray}
		\frac{\mathrm{d}}{\mathrm{d}\eta}E^{\lambda,\delta}_j(\eta)\bigg|_{\eta=0}=\Braket{\varphi_j,V^{\lambda,\delta}\varphi_j},\qquad
		\frac{\mathrm{d}^2}{\mathrm{d}\eta^2}E^{\lambda,\delta}_j(\eta)\bigg|_{\eta=0}=\sum_{\ell\neq j}\frac{\left|\Braket{\varphi_\ell,V^{\lambda,\delta}\varphi_j}\right|^2}{E_j-E_\ell}.
	\end{eqnarray}
	By Eqs.~\eqref{eq:brakets2}--\eqref{eq:brakets1}, the first derivative is thus zero, while the second one can be evaluated explicitly:
	\begin{equation}\label{eq:secondder}
		\frac{\mathrm{d}^2}{\mathrm{d}\eta^2}E^{\lambda,\delta}_j(\eta)\bigg|_{\eta=0}=\frac{\lambda^2}{8j^2\pi^2}-\frac{\lambda^2}{48}-\frac{\delta^2}{4}.
	\end{equation}
	Fix $(s_1,s_2)\in S\subset\mathbb{N}^2$ and $(t_1,t_2)\in\mathbb{N}^2$. We shall assume $s_2>s_1$ and $t_2>t_1$ without loss of generality, since in other cases we can relabel the indices accordingly. A resonance at $\eta=0$ occurs if and only if the real analytic function
	\begin{equation}\label{eq:resonanceta}
		\eta\mapsto f^{\lambda,\delta}_{s_1,s_2,t_1,t_2}(\eta)= \left(E_{s_2}^{\lambda,\delta}(\eta)-E_{s_1}^{\lambda,\delta}(\eta)\right)-\left(E_{t_2}^{\lambda,\delta}(\eta)-E_{t_1}^{\lambda,\delta}(\eta)\right)
	\end{equation}
	has a zero at $\eta=0$. Let $\Upsilon_{s_1,s_2,t_1,t_2}$ be the set of all values of $\eta$ for which the function above has a zero. By analyticity, either $\Upsilon_{s_1,s_2,t_1,t_2}$ is countable or it is full, i.e.\ the function above is identically zero. We shall show that the latter case never occurs.
	
	There are two cases. If the eigenvalues $E_{s_1},E_{s_2},E_{t_1},E_{t_2}$ are not resonant, then the function~\eqref{eq:resonanceta} is not identically zero. Let us then suppose that these eigenvalues are resonant, which implies
	\begin{equation}\label{eq:zerothorder}
		s_2^2-s_1^2=t_2^2-t_1^2.
	\end{equation}
	and therefore the function~\eqref{eq:resonanceta} has a zero at $\eta=0$. In order for it to be identically zero, its second derivative at $\eta=0$ should vanish as well. By Eq.~\eqref{eq:secondder}, the latter reads
	\begin{eqnarray}
		\frac{\mathrm{d}^2}{\mathrm{d}\eta^2}f^{\lambda,\delta}_{s_1,s_2,t_1,t_2}(\eta)\bigg|_{\eta=0}&=&-\frac{\lambda^2}{8\pi^2}\left[\left(\frac{1}{s_1^2}-\frac{1}{s_2^2}\right)-\left(\frac{1}{t_1^2}-\frac{1}{t_2^2}\right)\right],
	\end{eqnarray}
	thus it vanishes if and only if
	\begin{equation}
		\frac{s_2^2-s_1^2}{s_1^2s_2^2}=\frac{t_2^2-t_1^2}{t_1^2t_2^2}.
	\end{equation}
	Since Eq.~\eqref{eq:zerothorder} holds, the latter condition holds if and only if $s_1s_2=t_1t_2$. Therefore, in order for the function~\eqref{eq:resonanceta} to vanish identically, we must have necessarily
	\begin{equation}
		s_2^2-s_1^2=t_2^2-t_1^2,\qquad
		s_1s_2=t_1t_2,
	\end{equation}
	or equivalently
	\begin{equation}
		s_2^2+t_1^2=t_2^2+s_1^2,\qquad
		\frac{s_2}{t_1}=\frac{t_2}{s_1},
	\end{equation}
	which, recalling that $s_1,s_2,t_1,t_2\in\mathbb{N}$, is equivalent to the complex equality $s_2+\ii t_1=t_2+\ii s_1$, clearly implying $s_2=t_2$ and $s_1=t_1$.
	
	This shows that, for all $(s_1,s_2)\in S$ and $(t_1,t_2)\in\mathbb{N}^2$, excluding the trivial cases, the function~\eqref{eq:resonanceta} cannot be identically zero. Now take $0\leq\eta\leq\eta_0$ for some $\eta_0>0$ and consider the first $n$ analytic eigenvalue functions, $\{E_i^{\lambda,\delta}\}_{i=1}^n$. There is only a finite number of possible resonance functions $f^{\lambda,\delta}_{s_1,s_2,t_1,t_2}(\eta)$ among them and therefore only a finite number of values of $\eta$ where some of them, possibly more than one, vanish. Denote the complement of this set by $W^{\lambda,\delta}_n$, which is a dense open subset of $[0,\eta_0]$. The intersection $W^{\lambda,\delta}=\bigcap_{n\in\mathbb{N}}W^{\lambda,\delta}_n$ is dense because $[0,\eta_0]$ is a Baire space, and therefore $W^{\lambda,\delta}\cap[0,\eta_0] \neq \emptyset$, which proves that there exists $\eta\in[0,\eta_0]$ without resonances. Since $\eta_0$ was arbitrary, $\eta$ can be chosen as small as needed.
	
	It remains to show that $\eta$ can be chosen in such a way that $S$ is still a connectedness chain. Let $j,\ell\in\mathbb{N}^2$; since $S$ is a connectedness chain for the original control system, there is a finite sequence $(j,r_1),(r_1,r_2),\ldots,(r_{k-1},r_k),(r_k,\ell)\in S$ such that
	\begin{equation}
		\Braket{\varphi_j,V\varphi_{r_1}},\;\Braket{\varphi_{r_1},V\varphi_{r_2}},\;\ldots,\;\Braket{\varphi_{r_{k-1}},V\varphi_{r_k}},\;\Braket{\varphi_{r_{k}},V\varphi_{\ell}}\neq0.
	\end{equation}
	But then, again by an analyticity argument analogous to the one above,
	\begin{equation}
		\Braket{\varphi^{\lambda,\delta}_j(\eta),V\varphi^{\lambda,\delta}_{r_1}(\eta)}\cdots\Braket{\varphi^{\lambda,\delta}_{r_{k-1}}(\eta),V\varphi^{\lambda,\delta}_{r_k}(\eta)}\Braket{\varphi^{\lambda,\delta}_{r_{k}}(\eta),V\varphi^{\lambda,\delta}_{\ell}(\eta)}\neq0
	\end{equation}
	for all but countably many values of $\eta$. Indeed, the function above is analytic in $\eta$ \textit{and} non-zero at $\eta=0$. There are finitely many zeros for $0\leq\eta\leq\eta_0$, and therefore the function above is different from zero in an open neighborhood of $\eta=0$. Consequently, $S$ is a connectedness chain in that neighborhood, thus proving the statement.
\end{proof}

\begin{remark}\label{rem:lambda0}
	In the case $\lambda=0$, i.e.\ when the control system is $\Delta_{\rm Dir}+u\delta\,p,\,u\in(-r,r)$, the argument in Theorem~\ref{thm:auxiliary} does not apply directly since the derivatives of the perturbed eigenvalues at $\eta=0$ (see Eq.~\eqref{eq:secondder}) are all equal up to the second order. In fact, in this case it is fairly easy to prove a negative result: resonances \textit{cannot} be removed via the analyticity argument above. Indeed, the eigenvalue problem for the operator $\Delta_{\rm Dir}+\eta\delta\,p$ corresponds to the following equation:
	\begin{equation}
		\begin{cases}	
			-\Psi''(x)+\ii\,\eta\,\delta\Psi'(x)=E\Psi(x);\\
			\Psi\left(\pm\frac{1}{2}\right)=0,
		\end{cases}
	\end{equation}
	which can be easily solved explicitly, and admits solutions if and only if $\sin\left[\sqrt{2E+\frac{\delta^2\eta^2}{2}}\right]=0$, that is,
	\begin{equation}
		E_j(\eta)=j^2\pi^2-\frac{1}{4}\delta^2\eta^2,\qquad j\geq1,
	\end{equation}
	in agreement with Eq.~\eqref{eq:secondder}. Therefore, the perturbation only affects the spectrum by a uniform shift, and resonances cannot be removed in this way.
\end{remark}
Lemma~\ref{lemma:nonresonant} leads us to the proof of Theorem~\ref{thm:auxiliary}:

\begin{proof}[Proof of Theorem~\ref{thm:auxiliary}]
	
	We have to verify that $\Delta_{\rm Dir}+\eta V^{\lambda,\delta}$, $\eta\in(-r,r)$, satisfies the assumptions of~\cite[Theorem 2.6]{boscain2012weak}. As discussed, the free Hamiltonian $\Delta_{\rm Dir}$ has a purely discrete and non-degenerate spectrum, with a complete basis of eigenvectors. Moreover, in the case $\delta\neq0$ and $\lambda>0$, by Lemma~\ref{lemma:nonresonant} we can always find $\eta\in(-r,r)$ such that the perturbed control system has a non-resonant connectedness chain and thus, by~\cite[Theorem 2.6]{boscain2012weak}, is approximately controllable. Therefore, by Prop.~\ref{rem:controlperturb}, the original control system is approximately controllable as well.
	
	Now let $\delta=0$ and $\lambda>0$, which leads us to consider (up to an immaterial constant that can be reabsorbed by an immediate scaling) the control system $\Delta_{\rm Dir}+ux\circ p,\,u\in(-r,r)$. As already noticed,~\cite[Theorem 2.6]{boscain2012weak} does not apply in such a case since no connectedness chain can be found at all. However, it is easy to show that, by splitting the Hilbert space $L^2(\Omega)$ into its two parity-invariant sectors, then both operators $\Delta_{\rm Dir}$ and $x\circ p$ admit $L^2_\pm(\Omega)$ as reducing subspaces. This again implies that the two parity sectors $L^2_\pm(\Omega)$ of the control system $\Delta_{\rm Dir}+ux\circ p,\,u\in(-r,r)$ are separately approximately controllable via piecewise constant functions.
\end{proof}

\subsection{Step two: weak controllability by piecewise smooth functions}

We will now transfer the controllability result for the auxiliary system~\eqref{eq:auxiliary} via piecewise constant functions, given by Theorem~\ref{thm:auxiliary}, to the system~\reftildesigma{} in Eq.~\eqref{eq:schro2} with controls determined as in~\eqref{eq:transf2}. We will need to make use of the norms $\|\cdot\|_\pm$ as defined in Eq.~\eqref{eq:plusminus}, with $H_0$ being, in the case of interest here, the Dirichlet Laplacian $\Delta_{\rm Dir}$, and make use of the stability property given by Theorem~\ref{thm:abstract2}. 

\begin{theorem}\label{thm:piecewise}
	Let $\lambda>0$, $\delta\in\mathbb{R}$ and $r>0$, Then,
	\begin{itemize}
		\item[(i)] if $\delta\neq0$, for all $\Psi_0,\Psi_1\in L^2(\Omega)$ with $\|\Psi_0\|=\|\Psi_1\|$, and all $\epsilon>0$, there exist $T>0$ and a piecewise linear function $f:[0,T]\rightarrow\mathbb{R}$, with $|f(t)|<\ell_0/\lambda$, $|\dot f(t)|<r$ a.e., $f(0)=0$, and such that there exists an admissible solution $\tilde{U}^{\lambda,\delta}_f(t,s)$, $t,s\in [0,T]$, of system~\reftildesigma{} that satisfies
		\begin{equation}\label{eq:control}
			\left\|\Psi_1-\tilde{U}^{\lambda,\delta}_f(T,0)\Psi_0\right\|_-<\epsilon;
		\end{equation}
		\item[(ii)] if $\delta=0$, for all $\Psi_0,\Psi_1\in L^2_+(\Omega)$ (or equivalently $\Psi_0,\Psi_1\in L^2_-(\Omega)$) with $\|\Psi_0\|=\|\Psi_1\|$, and all $\epsilon>0$, there exist $T>0$ and a piecewise linear function $f:[0,T]\rightarrow\mathbb{R}$, with $|f(t)|<\ell_0/\lambda$, $|\dot f(t)|<r$ a.e., $f(0)=0$, and such that there exists an admissible solution $\tilde{U}^{\lambda,\delta}_f(t,s)$, $t,s\in [0,T]$, of system~\reftildesigma{} that satisfies Eq.~\eqref{eq:control}.
	\end{itemize}
\end{theorem}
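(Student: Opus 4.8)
The plan is to lift the controllability of the bilinear auxiliary system, established in Theorem~\ref{thm:auxiliary}, to the genuine system \reftildesigma{} by a stability argument based on Theorem~\ref{thm:abstract2}. Fix $\Psi_0,\Psi_1$ and $\epsilon>0$ (in the case $\delta=0$, both lying in the same parity sector, which is left invariant by $\Delta_{\rm Dir}$ and $V^{\lambda,0}=\lambda\,x\circ p$, so that all propagators below preserve it). Since the target estimate is in the weaker norm $\|\cdot\|_-$ while the stability bound~\eqref{eq:stability1} requires data in $\hilb^+$, I would first replace $\Psi_0$ by some $\Psi_0'\in\hilb^+$ with $\|\Psi_0-\Psi_0'\|$ small, absorbing the difference by unitarity and $\|\cdot\|_-\le\|\cdot\|$. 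Theorem~\ref{thm:auxiliary} then furnishes a piecewise constant control $v\colon[0,\hat T]\to(-r,r)$ whose propagator $\hat U_v$ satisfies $\|\Psi_1-\hat U_v(\hat T,0)\Psi_0'\|<\epsilon/2$. The natural candidate for the physical control is the piecewise linear primitive $f(t)=\int_0^t v(s)\,\mathrm{d}s$, so that $f(0)=0$ and $|\dot f|=|v|<r$ hold automatically, and $\dot f$ plays exactly the role that $v$ plays in the auxiliary Hamiltonian~\eqref{eq:auxiliary}.

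The main obstacle is that one cannot compare $\tilde U^{\lambda,\delta}_f$ with $\hat U_v$ directly through~\eqref{eq:stability1}: the two drift coefficients $[\ell_0+\lambda f]^{-2}$ and $\ell_0^{-2}$ differ, and their $L^1$ discrepancy is of order $\int_0^{\hat T}|f|$. This quantity \emph{cannot} be made small while $v$ performs a nontrivial transfer: integrating by parts, the resonant component $\int_0^{\hat T} v\,\e^{\ii\omega t}\,\mathrm{d}t$ driving a transition at Bohr frequency $\omega$ is bounded by $|f(\hat T)|+\omega\int_0^{\hat T}|f|$, so a transfer of order one forces $\int_0^{\hat T}|f|$ to be of order one as well. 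I would therefore neutralise the drift mismatch not by smallness but by the time reparametrization $\tau(t)=\int_0^t \ell_0^2[\ell_0+\lambda f(s)]^{-2}\,\mathrm{d}s$, which turns the drift into the constant $\ell_0^{-2}\Delta_{\rm Dir}$ precisely because the drift is a scalar multiple of the fixed operator $\Delta_{\rm Dir}$, and which is legitimate since $[\ell_0+\lambda f]^{-2}$ stays bounded above and below thanks to the smallness of $f$. In the new time the real system becomes exactly $\ell_0^{-2}\Delta_{\rm Dir}-\ell_0^{-2}(\ell_0+\lambda f)\,v\,V^{\lambda,\delta}$, with the \emph{same} drift as the auxiliary Hamiltonian and an interaction differing from $-\ell_0^{-1}v\,V^{\lambda,\delta}$ only by the term $-\ell_0^{-2}\lambda\,vf\,V^{\lambda,\delta}$.

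With the drift rates reconciled, both the reparametrized real system and the auxiliary system are of the abstract form~\eqref{eq:sum} with $H_0=\Delta_{\rm Dir}$ and $H_1=V^{\lambda,\delta}$, which satisfy Hypothesis~\ref{hyp} by Lemma~\ref{lemma:relbounded} and whose coefficients obey the bounds~\eqref{eq:bounds1} since $f,v$ are bounded and $\inf\ell_0^{-2}>0$. After a mild $\mathrm{C}^2$-regularization of $v$ (so that the coefficients are $\mathrm{C}^2$ and the constant $L$ of~\eqref{eq:l}, which depends only on the total variation of the controls, stays bounded as the regularization is removed), Theorem~\ref{thm:abstract2}(c) yields
\[
\bigl\|\bigl(\tilde U^{\lambda,\delta}_f(T,0)-\hat U_v(\hat T,0)\bigr)\Psi_0'\bigr\|_-\le L\,\|\Psi_0'\|_+\,\frac{\lambda}{\ell_0^2}\int_0^{\hat T}|v\,f|\,\mathrm{d}\tau,
\]
where $T$ is fixed by $\tau(T)=\hat T$ and I have used that reparametrization merely relabels the propagator. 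The breakpoints of the piecewise linear $f$ are handled by viewing $\tilde U^{\lambda,\delta}_f$ as an admissible solution (Remark~\ref{rem:admissible}) and applying the estimate on each smooth piece, or equivalently by passing to the regularization limit.

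It remains to make the right-hand side small. Bounding $\int_0^{\hat T}|vf|\,\mathrm{d}\tau\le \sup_t|f|\cdot\|v\|_{L^1}$, the crux is to choose the auxiliary control so that $\sup_t|f|=\sup_t|\int_0^t v|$ is arbitrarily small while $\|v\|_{L^1}$ stays bounded and the transfer is preserved. This is exactly what the resonant, oscillatory piecewise constant controls underlying~\cite{boscain2012weak} achieve: lowering the amplitude to some $A$ and lengthening the control time proportionally to $1/A$ keeps the pulse area $\|v\|_{L^1}$ of order one while driving $\sup_t|\int_0^t v|$ down to order $A$, in particular below $\ell_0/\lambda$. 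Choosing $A$ small enough that the displayed bound is below $\epsilon/2$, the triangle inequality in $\|\cdot\|_-$ together with the auxiliary estimate gives $\|\Psi_1-\tilde U^{\lambda,\delta}_f(T,0)\Psi_0\|_-<\epsilon$, which is~\eqref{eq:control}. The delicate point throughout is this interplay between the amplitude of the wall motion and the control time: it is what forces both the oscillatory controls and the reparametrization, and it is where I expect the bulk of the technical work to lie.
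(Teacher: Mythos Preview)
Your strategy diverges from the paper's at the crucial point, and the step you flag as ``where the bulk of the technical work lies'' is in fact a genuine gap. You take $f=\int_0^{\cdot}v$ and then need the auxiliary control $v$ to satisfy $\sup_t\bigl|\int_0^t v\bigr|\to 0$ while $\|v\|_{L^1}$ and the total variation of $v$ (which enters $L$ in~\eqref{eq:l}) stay bounded \emph{and} the transfer $\|\Psi_1-\hat U_v(\hat T,0)\Psi_0'\|<\epsilon/2$ is preserved. Nothing in Theorem~\ref{thm:auxiliary} or in \cite{boscain2012weak} guarantees this; the controls there are produced by Lie-algebraic/Galerkin arguments, not by an explicit small-amplitude resonant scheme, and on top of that the paper reaches Theorem~\ref{thm:auxiliary} through the perturbation of Lemma~\ref{lemma:nonresonant}, so the actual control is $u(t)+\eta$ with a fixed offset $\eta>0$, whose primitive grows linearly. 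Your time-reparametrization step also hides a second discrepancy: in the new time the interaction coefficient of the real system is $-\ell_0^{-2}(\ell_0+\lambda f(t(\tau)))\,v(t(\tau))$, whereas the auxiliary one is $-\ell_0^{-1}v(\tau)$, so the difference is not just $-\ell_0^{-2}\lambda fv$ but also contains $-\ell_0^{-1}\bigl(v(t(\tau))-v(\tau)\bigr)$, which is $O(1)$ near each jump of $v$.

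The paper sidesteps both issues by a construction you do not use: it does \emph{not} take $f$ to be the primitive of $v$. Instead, for each $n$ it partitions $[0,T]$ into $\tilde n\le n+d$ subintervals on which $v$ is constant, and defines the \emph{discontinuous} piecewise linear function $f_n(t)=\int_{t_{n,j}}^t v$ on $I_{n,j}$, i.e.\ the primitive is reset to $0$ at every partition point. This forces $|f_n(t)|<rT/n$ automatically, with no hypothesis on $v$ whatsoever; the jumps of $f_n$ are absorbed by the admissible-solution framework (Definition~\ref{def:admissible}, Remark~\ref{rem:admissible}). No reparametrization is used: one applies Theorem~\ref{thm:abstract2}(c) directly on each subinterval $I_{n,j}$, where both $v$ and $f_n$ are smooth, obtaining a constant $L$ independent of $n,j$ and an $L^1$-discrepancy of order $n^{-2}$ on each piece; a telescoping argument over the $\tilde n=O(n)$ pieces then gives $\|\tilde U_{f_n}(T,0)\Psi_0-\hat U_v(T,0)\Psi_0\|_-=O(n^{-1})\to 0$. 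The point you are missing is precisely that ``piecewise linear'' here permits jumps, and exploiting them is what decouples the smallness of $f$ from any structural assumption on $v$.
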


\begin{proof}
	We will prove (i); (ii) follows identically.
	
	Let $\Psi_0,\Psi_1\in H^1_0(\Omega)=\hilb^+$ with $\|\Psi_0\|=\|\Psi_1\|$, and $\epsilon>0$. 
	By Theorem~\ref{thm:auxiliary}, there exist $T>0$ and a piecewise constant function $v:[0,T]\rightarrow\mathbb{R}$, with $|v(t)|<r$, such that
	\begin{equation}\label{eq:start}
		\|\Psi_1-\hat{U}_v^{\lambda,\delta}(T,0)\Psi_0\|<\frac{\epsilon}{2},
	\end{equation}
	where $\hat{U}_v^{\lambda,\delta}(t,s)$, $t,s \in [0,T]$ is by Proposition~\ref{prop:transform}, cf.\ Remark~\ref{rem:admissible}, an admissible solution of the Schrödinger equation associated with the Hamiltonian $\hat{H}^{\lambda,\delta}_v(t)$.
	
	Let $d>0$ be the number of constant pieces of $v(t)$. We will now construct a sequence of piecewise linear functions $\{f_n\}_{n\in\mathbb{N}}$ such that, for all $n\geq1$, $\dot{f}_n(t)=v(t)$ at every continuity point of $f_n(t)$. For each $n\geq1$, divide the interval $[0,T]$ into $n$ subintervals of equal length $T/n$, and let $\{I_{n,j}\}_{j}$ be the coarsest refinement of this partition such that $v(t)$ is constant in each interval $I_{n,j}$, with $I_{n,j}=[t_{n,j},t_{n+1,j}]$. This partition is made of $\tilde{n}\leq n+d$ subintervals (one possibly added interval for each singularity of $v(t)$). With this construction, we define
	\begin{equation}\label{eq:fn}
		f_n(t)=\sum_{j=1}^{\tilde{n}}\left(\int_{t_{n,j}}^t v(s)\,\mathrm{d}s\right)\chi_{I_{n,j}}(t),
	\end{equation}
	with $\chi_{I_{n,j}}(t)$ being the characteristic function of the interval $I_{n,j}$. This is a piecewise linear function whose derivative, for all values of $t\neq t_{n,j}$, $j=1,\dots,\tilde{n}$, equals $\dot{f}_n(t)=v(t)$. In each interval $I_{n,j}$, it starts from the zero value and grows linearly. In particular, for all $t\in [0,T]$, Eq.~\eqref{eq:fn} implies
	\begin{equation}\label{eq:bound0}
		\left|f_n(t)\right|<\frac{rT}{n},
	\end{equation}
	since $|v(t)|<r$ and the length of each interval $I_{n,j}$ is at most $T/n$. In particular, 
	fixing any $\mu>0$, for $n$ sufficiently large we have $|f_n(t)|\leq\mu$; we shall henceforth restrict to such values of $n$.
		
	The basic idea is the following: $f_n(t)$ becomes closer to $0$ as $n$ grows, while its derivative is still $\dot f_n(t)=v(t)$ whenever it exists, so that, for large $n$, the auxiliary Hamiltonian $\hat{H}^{\lambda,\delta}_v(t)$ and the Hamiltonian $\tilde{H}^{\lambda,\delta}_{f_n}(t)$, 
	respectively defined by Eq.~\eqref{eq:auxiliary} and Eq.~\eqref{eq:transf2} with $\dot{f}_n(t)=v(t)$,
	become closer, and so will their corresponding unitary propagators.
	Since $\lambda$, $\delta$, and $v$ are fixed, in the remainder of the proof we shall simplify our notation by simply setting
	\hbox{$	\hat{U}_v^{\lambda,\delta}(t,s)\equiv U_0(t,s)$,}
	\hbox{$U^{\lambda,\delta}_{f_n}(t,s)\equiv U_n(t,s)$},
	and correspondingly \hbox{$\hat{H}_v^{\lambda,\delta}(t)\equiv H_0(t)$}, \hbox{$\tilde{H}^{\lambda,\delta}_{f_n}(t)\equiv H_n(t)$}, and $V^{\lambda,\delta}\equiv V$.
	
	Fix $n\geq1$, $j=1,\dots,\tilde{n}$, and consider the \textit{pair} of Hamiltonians $H_0(t)$, $H_{n}(t)$, with $t$ ranging in the $j$th interval $I_{n,j}$. We aim at using the stability argument (Theorem~\ref{thm:abstract2}), for a \textit{fixed} value of $n>\lambda rT/\ell_0$, to the $n$-dependent pair of Hamiltonians $H_0(t)$, $H_{n}(t)$, in the $n$- and $j$-dependent interval $I_{n,j}$. Notice that we cannot invoke any stability argument for the full family of Hamiltonians $\{\tilde{H}_{n}(t)\}_{n}$, since, for sufficiently large $n$, any fixed time interval will contain singularities of $f_n(t)$.
	
	Since $V$ is a linear combination of $p$ and $x\circ p$, by Lemma~\ref{lemma:relbounded} it is infinitesimally form bounded with respect to $\Delta_{\rm Dir}$, so that Hypothesis~\ref{hyp} is satisfied. Since all functions are two times continuously differentiable and bounded, Theorem~\ref{thm:abstract2} applies:
	\begin{equation}\label{eq:boundlnj}
		\left\|U_{n}(t,s)-U_0(t,s)\right\|_{+,-}\leq L_{n,j}\left\|G_n\right\|_{L^1(I_{n,j})},
	\end{equation}
	where
	\begin{equation}\label{eq:gn}
		G_n(t):=\left(\frac{1}{\ell_0^2}-\frac{1}{\left[\ell_0+\lambda f_n(t)\right]^2}\right)+v(t)\left(\frac{1}{\ell_0}-\frac{1}{\ell_0+\lambda f_n(t)}\right),
	\end{equation}
	and with a constant $L_{n,j}$ being given by Eq.~\eqref{eq:l}, which is therefore proportional to the exponential of the largest of the $L^1(I_{n,j})$ norms of the derivatives of the coefficients of the two Hamiltonians in Eqs.~\eqref{eq:auxiliary} and~\eqref{eq:transf2}. The two only coefficients with non-zero derivatives satisfy, recalling again that $\dot f_n(t)=v(t)$ and $\dot{v}(t)=0$ in $I_{n,j}$,
	\begin{eqnarray}
		\left|\frac{\mathrm{d}}{\mathrm{d}t}\frac{1}{\left[\ell_0+\lambda f_n(t)\right]^2}\right|&=&\frac{2\lambda v(t)}{\left[\ell_0+\lambda f_n(t)\right]^3}\leq\frac{2\lambda r}{\left[\ell_0-\lambda\mu\right]^3}\\
		\left|\frac{\mathrm{d}}{\mathrm{d}t}\frac{v(t)}{\ell_0+\lambda f_n(t)}\right|&=&\frac{\lambda v(t)^2}{\left[\ell_0+\lambda f_n(t)\right]^2}\leq\frac{\lambda r^2}{\left[\ell_0-\lambda\mu\right]^2},
	\end{eqnarray}
	thus implying
	\begin{eqnarray}\label{eq:lnj}
		L_{n,j}&\leq&c^8\exp\left\{2Kc^2\max\left\{\frac{2T\lambda r}{n\left[\ell_0-\lambda\mu\right]^3},\frac{T\lambda r^2}{n\left[\ell_0-\lambda\mu\right]^2}\right\}\right\}\nonumber\\&\leq& c^8\exp\left\{2Kc^2\max\left\{\frac{2T\lambda r}{\left[\ell_0-\lambda\mu\right]^3},\frac{T\lambda r^2}{\left[\ell_0-\lambda\mu\right]^2}\right\}\right\}=:L,
	\end{eqnarray}
	with the constants $K>0$ and $c>1$ as given respectively in Lemma~\ref{lemma:kappa} and Theorem~\ref{thm:abstract2}(a). $L$ can be therefore been taken common to \textit{all} $n$ and $j$. This can be done since the equivalence of norms in the theorem does not require any regularity property on the coefficients, and can thus be applied to all Hamiltonians.
	
	This shows that Eq.~\eqref{eq:boundlnj} holds, in fact, with a constant $L$ independent of $n$ or $j$:
	\begin{equation}\label{eq:boundl}
		\left\|U_{n}(t,s)-U_0(t,s)\right\|_{+,-}\leq L\left\|G_n\right\|_{L^1(I_{n,j})}.
	\end{equation}
	Let us estimate $\|G_n\|_{L^1(I_{n,j})}$. By Eq.~\eqref{eq:gn}, and recalling Eq.~\eqref{eq:bound0},
	\begin{eqnarray}
		G_n(t)&=&\frac{2\ell_0\lambda f_n(t)+\lambda^2f_n(t)^2}{\ell_0^2\left[\ell_0+\lambda f_n(t)\right]^2}+v(t)\frac{\lambda f_n(t)}{\ell_0\left[\ell_0+\lambda f_n(t)\right]}\nonumber\\
		&\leq&\frac{2\lambda rT}{n\ell_0\left[\ell_0-\lambda\mu\right]^2}+\frac{\lambda^2r^2T^2}{n^2\ell_0^2\left[\ell_0-\lambda\mu\right]}+\frac{\lambda r^2T}{n\ell_0\left[\ell_0-\lambda\mu\right]},
	\end{eqnarray}
	and therefore
	\begin{equation}\label{eq:injbound}
		\|G_n\|_{L^1(I_{n,j})}\leq
		\frac{2\lambda rT^2}{n^2\ell_0\left[\ell_0-\lambda\mu\right]^2}
		+\frac{\lambda^2r^2T^3}{n^3\ell_0^2\left[\ell_0-\lambda\mu\right]^2}
		+\frac{\lambda r^2T^2}{n^2\ell_0\left[\ell_0-\lambda\mu\right]}
		=\mathcal{O}(n^{-2})	.
	\end{equation}
	The equation above implies that, separately in each interval $I_{n,j}$, the norm of the difference between the two propagators is $\mathcal{O}(n^{-2})$ and independent of $j$. Therefore, we have
	\begin{eqnarray}\label{eq:claim}
		\left\|\Psi_1-U_{n}(T,0)\Psi_0\right\|_-&\leq&\|\Psi_1-U_0(T,0)\Psi_0\|_-+\|U_{n}(T,0)\Psi_0-U_0(T,0)\Psi_0\|_-\nonumber\\
		&<&\|\Psi_1-U_0(T,0)\Psi_0\|+\|U_{n}(T,0)\Psi_0-U_0(T,0)\Psi_0\|_-\nonumber\\
		&<&\frac{\epsilon}{2}+\|U_{n}(T,0)\Psi_0-U_0(T,0)\Psi_0\|_-,
	\end{eqnarray}
	hence it is enough to show that, for every $t\in[0,T]$,
	\begin{equation}\label{eq:will}
		\lim_{n\to\infty}\|U_{n}(t,0)\Psi_0-U_0(t,0)\Psi_0\|_-=0
	\end{equation}
	uniformly in $t$. Let us set hereafter
	\begin{equation}
		\Psi_{0,n}(t)=U_{n}(t,0)\Psi_0,\qquad \Psi_0(t)=U_0(t,0)\Psi_0
	\end{equation}
	for all $t$. The family of norms associated with the sequence of time-dependent Hamiltonians $\{H_n(t)\}_{n\in\mathbb{N}}$ will be labeled by $\|\cdot\|_{\pm,n,t}$. Recall that by Theorem~\ref{thm:abstract2}(a)
	\begin{equation}\label{eq:equiv}
		\frac{1}{c}\|\cdot\|_{\pm}\leq\|\cdot\|_{\pm,n,t}\leq c\|\cdot\|_{\pm}.
	\end{equation}
	Suppose $t\in I_{n,j}$ for some integer $j$. Then
	\begin{eqnarray}
		\left\|\Psi_{0,n}(t)-\Psi_0(t)\right\|_{-,0,t}&=&\left\|U_{n}(t,t_{n,j})\Psi_{0,n}(t_{n,j})-U_0(t,t_{n,j})\Psi_{0}(t_{n,j})\right\|_{-,0,t}\nonumber\\
		&\leq&\left\|\left(U_n(t,t_{n,j})-U_0(t,t_{n,j})\right)\Psi_{0,n}(t_{n,j})\right\|_{-,0,t}\nonumber\\
		&&+\left\|U_0(t,t_{n,j})\left(\Psi_{0,n}(t_{n,j})-\Psi_0(t_{n,j})\right)\right\|_{-,0,t}.
	\end{eqnarray}
	The first term can be bounded by using Eq.~\eqref{eq:boundl} and the equivalence of norms~\eqref{eq:equiv}:
	\begin{eqnarray}\label{eq:firstterm}
		\left\|\left(U_n(t,t_{n,j})-U_0(t,t_{n,j})\right)\Psi_{0,n}(t_{n,j})\right\|_{-,0,t}&\leq&c\left\|\left(U_n(t,t_{n,j})-U_0(t,t_{n,j})\right)\Psi_{0,n}(t_{n,j})\right\|_{-}\nonumber\\
		&\leq&c\left\|U_n(t,t_{n,j})-U_0(t,t_{n,j})\right\|_{+,-}\|\Psi_{0,n}(t_{n,j})\|_{+}\nonumber\\
		&\leq&cL\|G_n\|_{L^1(t_{n,j},t)}
		\|\Psi_{0,n}(t_{n,j})\|_+;
	\end{eqnarray}
	on the other hand, following the ideas in~\cite[Appendix II.7]{Simon1971}, it can be shown that
	\begin{equation}\label{eq:firstterm2}
		\|\Psi_{0,n}(t_{n,j})\|_{+,n,t}\leq 
		\e^{\frac{3}{2}c^2M}\|\Psi_0\|_{+,n,0},
	\end{equation}
	where the constant $M\geq0$ only depends on the derivatives of the coefficients of the form linear Hamiltonian and vanishes if they vanish. Therefore, by using Eqs.~\eqref{eq:firstterm}--\eqref{eq:firstterm2} and the equivalence of norms twice,
	\begin{equation}
		\left\|\left(U_n(t,t_{n,j})-U_0(t,t_{n,j})\right)\Psi_{0,n}(t_{n,j})\right\|_{-,0,t}\leq c^3L\e^{\frac{3}{2}c^2M}\|G_n\|_{L^1(t_{n,j},t)} \|\Psi_0\|_+.
	\end{equation}
	Besides, since $\dot v(t)=0$ for $t\in I_{n,j}$,
	\begin{equation}\label{eq:secondterm}
		\left\|U_0(t,t_{n,j})\left(\Psi_{0,n}(t_{n,j})-\Psi_0(t_{n,j})\right)\right\|_{-,0,t}\leq	\left\|\Psi_{0,n}(t_{n,j})-\Psi_0(t_{n,j})\right\|_{-,0,t_{n,j}}.
	\end{equation}
	Combining the inequalities~\eqref{eq:firstterm} and~\eqref{eq:secondterm} we get
	\begin{equation}
		\left\|\Psi_{0,n}(t)-\Psi_0(t)\right\|_{-,0,t}\leq c^3L\e^{\frac{3}{2}c^2M}\|G_n\|_{L^1(t_{n,j},t)} \|\Psi_0\|_++\left\|\Psi_{0,n}(t_{n,j})-\Psi_0(t_{n,j})\right\|_{-,0,t_{n,j}}.
	\end{equation}
	Therefore, we have bounded the difference between the evolutions at time $t$ with respect to the difference of the evolutions at time $t_{n,j}$. Iterating these bounds and applying Eq.~\eqref{eq:injbound} to each of the $\tilde{n}\leq n+d$ subintervals $I_{n,j}$, one obtains
	\begin{eqnarray}
		\left\|\Psi_{0,n}(t)-\Psi_0(t)\right\|_{-,0,t}&\leq& c^3L\e^{\frac{3}{2}c^2M} \|\Psi_0\|_+\Bigl(
		\|G_n\|_{L^1(t_{n,j},t)}+\|G_n\|_{L^1(t_{n,j-1},t_{n,j})}+\nonumber\\
		&&...+\|G_n\|_{L^1(0,t_{n,1})}\Bigr)\nonumber\\
		&\leq&c^3L\e^{\frac{3}{2}c^2M} \|\Psi_0\|_+
		(n+d)
		\Bigl(
		\frac{2\lambda rT^2}{n^2\ell_0^3}+\frac{\lambda^2r^2T^3}{n^3\ell_0^4}+\frac{\lambda r^2T^2}{n^2\ell_0^2}
		\Bigr),
	\end{eqnarray}\normalsize
	and the latter quantity goes to zero as $n\to\infty$, which, having into account the uniform equivalence of the norms, shows that Eq.~\eqref{eq:control} does hold for all $\Psi_0,\Psi_1\in\mathrm{H}^1_0(\Omega)=\hilb^+$.
	
	To complete the proof, we must show that Eq.~\eqref{eq:control} holds for arbitrary $\Psi_0,\Psi_1\in L^2(\Omega)$. To this purpose, we can simply make use of an $\epsilon/3$-argument. Namely, given $\Psi_0,\Psi_1\in L^2(\Omega)$ with $\|\Psi_0\|=\|\Psi_1\|$ and $\epsilon>0$, there exist $\Psi_0',\Psi_1'\in\hilb^+$ with $\|\Psi_0\|=\|\Psi_1\|$ such that
	\begin{equation}
		\|\Psi_s-\Psi_s'\|<\frac{\epsilon}{3},\qquad s=0,1,
	\end{equation}
	and, as proven above, there exist $T>0$ and a piecewise linear function $f:[0,T]\rightarrow\mathbb{R}$, such that
	\begin{equation}\label{eq:controlbis}
		\left\|\Psi_1'-\tilde{U}^{\lambda,\delta}_f(T,0)\Psi_0'\right\|_-<\frac{\epsilon}{3},
	\end{equation}
	whence, by the properties of the norms $\|\cdot\|$ and $\|\cdot\|_\pm$ (cf.\ Remark~\ref{rem:scale}),
	\begin{eqnarray}
		\left\|\Psi_1-\tilde{U}^{\lambda,\delta}_f(T,0)\Psi_0\right\|_-&<&\|\Psi_1-\Psi_1'\|_-+\left\|\Psi_1'-\tilde{U}^{\lambda,\delta}_f(T,0)\Psi_0'\right\|_-+\|\tilde{U}^{\lambda,\delta}_f(T,0)(\Psi_0-\Psi_0')\|_-\nonumber\\
		&\leq&\|\Psi_1-\Psi_1'\|+\left\|\Psi_1'-\tilde{U}^{\lambda,\delta}_f(T,0)\Psi_0'\right\|_-+\|\tilde{U}^{\lambda,\delta}_f(T,0)(\Psi_0-\Psi_0')\|\nonumber\\
		&<&\frac{\epsilon}{3}+\frac{\epsilon}{3}+\frac{\epsilon}{3}=\epsilon,
	\end{eqnarray}
	where we also used the unitarity of $\tilde{U}^{\lambda,\delta}_f(T,0)$.
\end{proof}

Theorem~\ref{thm:piecewise} is a first, important step towards our desired result. Some considerations are in order. First, notice that the control function $f:[0,T]\rightarrow\mathbb{R}$ is a piecewise linear function, given by Eq.~\eqref{eq:fn}, with a number of discontinuities $n$ which is larger for smaller $\epsilon>0$: in words, if we require approximate controllability with a higher degree of precision, the price to pay is a higher number of such discontinuities. 

Also notice that, by construction, we have $f(0)=0$: the initial value of the control function is known beforehand. As discussed before, and for reasons that will become clear when we prove the final result, we will need the \textit{final} value $f(T)$ to be known a priori as well. This can be achieved without effort: basically, we need to make the system evolve with the control function given by Theorem~\ref{thm:piecewise}, and then making it evolve for a sufficiently small time with a constant-valued control equal to the desired final value. This is the content of the next result.

\begin{corollary}\label{prop:piecewise2}
	Let $\lambda>0$, $\delta\in\mathbb{R}$, $r>0$, and $a>-\ell_0/\lambda$. Then,		
	\begin{itemize}
		\item[(i)] if $\delta\neq0$, for all $\Psi_0,\Psi_1\in L^2(\Omega)$ with $\|\Psi_0\|=\|\Psi_1\|$, and all $\epsilon>0$, there exist $T>0$ and a piecewise linear function $f:[0,T]\rightarrow\mathbb{R}$, with $|f(t)|<\ell_0/\lambda$, $|\dot f(t)|<r$ a.e., $f(0)=0$, $f(T)=a$, and such that there exists an admissible solution $\tilde{U}^{\lambda,\delta}_f(t,s)$, $t,s\in [0,T]$, of system~\reftildesigma{} that satisfies
		\begin{equation}\label{eq:control2}
			\left\|\Psi_1-\tilde{U}^{\lambda,\delta}_f(T,0)\Psi_0\right\|_-<\epsilon;
		\end{equation}
		\item[(ii)] if $\delta=0$, for all $\Psi_0,\Psi_1\in L^2_\pm(\Omega)$ with $\|\Psi_0\|=\|\Psi_1\|$, and all $\epsilon>0$, there exist $T>0$ and a piecewise linear function $f:[0,T]\rightarrow\mathbb{R}$, with $|f(t)|<\ell_0/\lambda$, $|\dot f(t)|<r$ a.e., $f(0)=0$, $f(T)=a$, and such that there exists an admissible solution $\tilde{U}^{\lambda,\delta}_f(t,s)$, $t,s\in [0,T]$, of system~\reftildesigma{} that satisfies Eq.~\eqref{eq:control2}.			
	\end{itemize}
\end{corollary}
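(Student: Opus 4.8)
The plan is to obtain the prescribed terminal value $f(T)=a$ by concatenating the control produced by Theorem~\ref{thm:piecewise} with a short final segment on which the control is held \emph{constant} and equal to $a$, so that the interaction term disappears and the extra evolution can be made negligible in the $\|\cdot\|_-$ norm. I would prove (i) and treat (ii) identically, working first with $\Psi_0,\Psi_1\in\hilb^+=\mathrm{H}^1_0(\Omega)$ and passing to $L^2(\Omega)$ (resp.\ $L^2_\pm(\Omega)$) by density only at the very end, exactly as in the proof of Theorem~\ref{thm:piecewise}.

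For the construction, I would first apply Theorem~\ref{thm:piecewise}(i) to $\Psi_0,\Psi_1$ with tolerance $\epsilon/2$, producing $T_1>0$ and a piecewise linear $f_1:[0,T_1]\to\mathbb{R}$ with $|f_1|<\ell_0/\lambda$, $|\dot f_1|<r$, $f_1(0)=0$, and an admissible solution satisfying $\|\Psi_1-\tilde U^{\lambda,\delta}_{f_1}(T_1,0)\Psi_0\|_-<\epsilon/2$. Then, for a small $\tau>0$ to be chosen, I set $T=T_1+\tau$ and define $f$ on $[0,T]$ by $f|_{[0,T_1]}=f_1$ and $f\equiv a$ on $(T_1,T]$; this is again piecewise linear (with one additional jump at $T_1$, which is permitted for admissible solutions) and has $f(T)=a$ and $|\dot f|<r$. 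Since $\dot f=0$ on the last interval, Eq.~\eqref{eq:transf2} shows the interaction drops out and the generator there is the constant positive operator $\kappa\Delta_{\rm Dir}$ with $\kappa=[\ell_0+\lambda a]^{-2}$, which is well defined because $a>-\ell_0/\lambda$ forces $\ell_0+\lambda a>0$. Its propagator is the strongly continuous group $\e^{-\ii(t-T_1)\kappa\Delta_{\rm Dir}}$, so concatenation with $\tilde U^{\lambda,\delta}_{f_1}$ yields an admissible solution in the sense of Def.~\ref{def:admissible}, with $\tilde U^{\lambda,\delta}_f(T,0)=\e^{-\ii\tau\kappa\Delta_{\rm Dir}}\,\tilde U^{\lambda,\delta}_{f_1}(T_1,0)$.

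The crux, and the main obstacle, is to show that this final segment costs at most $\epsilon/2$ in $\|\cdot\|_-$, even though $\e^{-\ii\tau\kappa\Delta_{\rm Dir}}$ is \emph{not} close to the identity in operator norm (its generator is unbounded). Setting $\Phi=\tilde U^{\lambda,\delta}_{f_1}(T_1,0)\Psi_0\in\hilb^+$, I would write $(\e^{-\ii\tau\kappa\Delta_{\rm Dir}}-\mathbb{I})\Phi=-\ii\int_0^\tau\kappa\Delta_{\rm Dir}\e^{-\ii s\kappa\Delta_{\rm Dir}}\Phi\,\mathrm{d}s$ and use the elementary bound $\|\Delta_{\rm Dir}\psi\|_-\leq 2\|\psi\|_+$ valid on the scale $(\hilb^+,\hilb,\hilb^-)$ of Remark~\ref{rem:scale}. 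Since $\e^{-\ii s\kappa\Delta_{\rm Dir}}$ commutes with $\Delta_{\rm Dir}$ and hence preserves $\|\cdot\|_+$, this gives the linear-in-$\tau$ estimate $\|(\e^{-\ii\tau\kappa\Delta_{\rm Dir}}-\mathbb{I})\Phi\|_-\leq 2\kappa\tau\|\Phi\|_+$. The $\hilb^+$-norm of $\Phi$ is in turn controlled by that of $\Psi_0$ through the propagation estimate already used in Theorem~\ref{thm:piecewise} (cf.\ Eq.~\eqref{eq:firstterm2} together with the norm equivalence of Theorem~\ref{thm:abstract2}(a)), so $\|\Phi\|_+\leq C\|\Psi_0\|_+<\infty$. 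Choosing $\tau$ small enough that $2\kappa C\tau\|\Psi_0\|_+<\epsilon/2$ and invoking the triangle inequality then yields Eq.~\eqref{eq:control2}. It is precisely this trade of operator-norm discontinuity for a bound from $\hilb^+$ to $\hilb^-$ that forces the argument to be carried out on the form domain first.

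Finally, I would remove the restriction $\Psi_0,\Psi_1\in\hilb^+$ by the same $\epsilon/3$ density argument as in Theorem~\ref{thm:piecewise}, using the unitarity of $\tilde U^{\lambda,\delta}_f(T,0)$ together with $\|\cdot\|_-\leq\|\cdot\|$. For part (ii) one additionally observes that $\Delta_{\rm Dir}$, and therefore $\e^{-\ii\tau\kappa\Delta_{\rm Dir}}$, reduces the parity subspaces $L^2_\pm(\Omega)$, so that the appended free evolution keeps the state inside the chosen sector and the conclusion of Theorem~\ref{thm:piecewise}(ii) is preserved.
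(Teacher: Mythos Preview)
Your proposal is correct and follows the same construction as the paper: apply Theorem~\ref{thm:piecewise} with tolerance $\epsilon/2$, then append a short final segment on which $f\equiv a$ is constant, so that the interaction term vanishes and the generator becomes $\kappa\Delta_{\rm Dir}$ with $\kappa=(\ell_0+\lambda a)^{-2}$.

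The only difference lies in how you bound the cost of the final segment. You produce an explicit quantitative estimate $\|(\e^{-\ii\tau\kappa\Delta_{\rm Dir}}-\mathbb{I})\Phi\|_-\le 2\kappa\tau\|\Phi\|_+$, which forces you to first restrict to $\Psi_0\in\hilb^+$, invoke the $\hilb^+$-propagation bound, and then close with a density argument. The paper instead simply uses Stone's theorem: $\tau\mapsto\e^{-\ii\tau\kappa\Delta_{\rm Dir}}$ is strongly continuous on $L^2(\Omega)$, hence $\|\e^{-\ii\tau\kappa\Delta_{\rm Dir}}\Phi-\Phi\|\to0$ for \emph{every} $\Phi\in L^2(\Omega)$, and since $\|\cdot\|_-\le\|\cdot\|$ this already gives the $\|\cdot\|_-$-smallness for $\tau$ small enough. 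This avoids the detour through $\hilb^+$ altogether (Theorem~\ref{thm:piecewise} is already stated for arbitrary $L^2$ states), so the paper's proof is a few lines long. Your argument is sound, just more laborious than necessary.
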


\begin{proof}
	Again we prove (i), with (ii) following identically. Let $\Psi_0,\Psi_1\in L^2(\Omega)$ with $\|\Psi_0\|=\|\Psi_1\|$, and $\epsilon>0$. By Theorem~\ref{thm:piecewise}, there exist $T'>0$ and a piecewise linear function $g:[0,T']\rightarrow\mathbb{R}$ with $f(0)=0$ such that
	\begin{equation}\label{eq:control3}
		\left\|\Psi_1-\tilde{U}^{\lambda,\delta}_g(T',0)\Psi_0\right\|_-<\frac{\epsilon}{2}.
	\end{equation}
	Consider the time-independent Hamiltonian $\tilde{H}^{\lambda,\delta}_a$, cf.~Eq.~\eqref{eq:transf2}, with $a(t)\equiv a$ the constant function, that is,
	\begin{equation}
		\tilde{H}^{\lambda,\delta}_a=\frac{1}{\left[\ell_0+a\lambda\right]^2}\Delta_{\rm Dir}.
	\end{equation}
	The latter is a constant Hamiltonian, whence, by Stone's theorem, the solution of the Schrödinger equation generated by it is a strongly continuous one-parameter group. An $\epsilon/2$-argument proves that there exists $\tau>0$ such that
	\begin{eqnarray}
		\left\|\Psi_1-\e^{-\ii\tau \tilde{H}^{\lambda,\delta}_a}\tilde{U}_g^{\lambda,\delta}(T',0)\Psi_0\right\|_-<\epsilon,
	\end{eqnarray}
	which completes the proof.
\end{proof}

\subsection{Step three: from the auxiliary system to the moving walls setup}

\begin{proof}[Proof of Theorem~\ref{thm:main}]
	Let $\ell_0,\ell_1>0$, $d_0,d_1\in\mathbb{R}$.
	We will show that approximate controllability can be achieved with two functions $t\mapsto d(t),\ell(t)$ as given by Eqs.~\eqref{eq:dft}--\eqref{eq:lft} for some suitable choice of the parameters $\lambda,\delta$, and the control function $t\mapsto f(t)$. The latter must satisfy the following constraints at the final time $T>0$: 
	\begin{equation}
		\begin{cases}
			\lambda\,f(T)&=\Delta\ell;\\
			\delta\,f(T)&=\Delta d,
		\end{cases}
	\end{equation}
	with $\Delta\ell=\ell_1-\ell_0$ and $\Delta d=d_1-d_0$. 
	
	\paragraph{Case (i): $\delta \neq0$.} Without loss of generality we can assume $\lambda=1$. We will consider two measurable functions $t\mapsto \ell(t),d(t)$ in the form
	\begin{equation}\label{eq:delta}
			\ell(t)=\ell_0+f(t),\qquad d(t)=d_0+ \delta f(t),
    \end{equation}
	with $f(t)$ to be determined. For $\Delta\ell\cdot\Delta d\neq 0$  take $\delta=\Delta d/\Delta \ell$, for $\Delta\ell=\Delta d = 0$ take $\delta=1$.
	Define $\Psi_0=W_{\ell_0,d_0}\Phi_0$ and $\Psi_1=W_{\ell_1,d_1}\Phi_1$, with $W_{\ell,d}$ as defined in Eq.~\eqref{eq:w}. Let \hbox{$\Phi_1\in\mathrm{H}^1_0\left(\Omega_{\ell_1,d_1}\right)$}; since the transformation $W_{\ell,d}$ preserves the Sobolev spaces, see Eq.~\eqref{eq:regularity},
	\begin{equation}
		\Phi_1\in\mathrm{H}^1_0\left(\Omega_{\ell_1,d_1}\right)\implies \Psi_1=W_{\ell_1,d_1}\Phi_1\in\mathrm{H}^1_0(\Omega)=\hilb^+.
	\end{equation}
	By Corollary~\ref{prop:piecewise2}(i), there exist $T>0$ and a piecewise linear function $f:[0,T]\rightarrow\mathbb{R}$ satisfying
	\begin{equation}\label{eq:final}
			f(0)=0,\qquad f(T)=\Delta\ell,\qquad |f(t)|<\ell_0,\qquad |\dot f(t)|<r\;\;  \text{a.e.}
		\end{equation}
	and such that there exists an admissible solution $\tilde{U}^{1,\delta}_f(t,s)$, $t,s\in[0,T]$ of system~\reftildesigma{} that satisfies
	\begin{equation}
		\left\|\Psi_1-\tilde{U}^{1,\delta}_f(T,0)\Psi_0\right\|_-<\frac{\epsilon^2}{2\|\Psi_1\|_+}.
	\end{equation}
	Consequently,
	{\small
			\begin{eqnarray}
				\left\|\Psi_1-\tilde{U}^{1,\delta}_f(T,0)\Psi_0\right\|^2&=&\Braket{\Psi_1,\Psi_1-\tilde{U}^{1,\delta}_f(T,0)\Psi_0}+\Braket{-\tilde{U}^{1,\delta}_f(T,0)\Psi_0,\Psi_1-\tilde{U}^{1,\delta}_f(T,0)\Psi_0}\nonumber\\
				&=&\Braket{\Psi_1,\Psi_1-\tilde{U}^{1,\delta}_f(T,0)\Psi_0}+\|\Psi_0\|^2+\Braket{\Psi_1-\tilde{U}^{1,\delta}_f(T,0)\Psi_0,\Psi_1}-\|\Psi_1\|^2\nonumber\\
				&=&2\Re\,\Braket{\Psi_1,\Psi_1-\tilde{U}^{1,\delta}_f(T,0)\Psi_0}\nonumber\\
				&\leq&2\left|\Braket{\Psi_1,\Psi_1-\tilde{U}^{1,\delta}_f(T,0)\Psi_0}\right|\nonumber\\
				&\leq&2\|\Psi_1\|_+\left\|\Psi_1-\tilde{U}^{1,\delta}_f(T,0)\Psi_0\right\|_-<\epsilon^2,
			\end{eqnarray}
	}
	where we have used the properties of the norms $\|\cdot\|_{\pm}$, cf\ Remark~\ref{rem:scale}.
	
	On the other hand, with this choice of control function, the functions $t\mapsto\ell(t),d(t)$ defined via Eqs.~\eqref{eq:dft}--\eqref{eq:lft} satisfy, because of Eqs.~\eqref{eq:delta} and~\eqref{eq:final},
	\begin{equation}
		\ell(T)=\ell_1,\qquad d(T)=d_1;
	\end{equation}
	Moreover, we can use Corollary~\ref{coroll:transform} to obtain an admissible solution $U^{1,\delta}_f(t,s)$ of system~\refsigma{}. We have
	\begin{eqnarray}
			\tilde{U}^{1,\delta}_f(T,0)&=&W_{\ell_1,d_1}U^{1,\delta}_f(T,0)W^\dag_{\ell_0,d_0}.
		\end{eqnarray}
	By the unitarity of $W_{\ell,d}$,
	\begin{equation}
		\bigl\|\Phi_1-U^{1,\delta}_f(T,0)\Phi_0\bigr\|=\bigl\|W_{\ell_1,d_1}(\Phi_1-U^{1,\delta}_f(T,0)\Phi_0)\bigr\|
		=\bigl\|\Psi_1-\tilde{U}^{1,\delta}_f(T,0)\Psi_0\bigr\|
		<\epsilon.
	\end{equation}
	This inequality has been proven for $\Phi_1\in\mathrm{H}^1_0(\Omega_{\ell_1,d_1})$; since the latter is dense in $L^2(\Omega_{\ell_1,d_1})$, the inequality readily extends to an arbitrary $\Phi_1\in L^2(\Omega_{\ell_1,d_1})$ via an $\epsilon/2$-argument, thus proving the claim.
		
	\paragraph{Case (ii): $\delta=0$.} In this case we will consider two functions $t\mapsto \ell(t),d(t)$ in the form
	\begin{equation}
		\ell(t)=\ell_0+f(t),\qquad d(t)=d_0,
	\end{equation}
	that is, a pure dilation. This entails taking into account a wall motion as in Eqs.~\eqref{eq:dft}--\eqref{eq:lft} with $\lambda=1$ and $\delta=0$. The rest of the proof is identical to the case (i) but using Corollary~\ref{prop:piecewise2}(ii) instead.
\end{proof}

\section{Conclusions}\label{sec:conclusions}

In this work we have addressed the feasibility of quantum control for a non-relativistic particle confined in a moving box. Our main findings can be summarized in the following way. After preparing the system in any state confined in a box with given length and given position of the center, there always exists a particular motion of the box walls such that the initial state is driven arbitrarily close to any target state of a prescribed final position of the walls of the box, provided that the position of the center during the evolution does not remain constant. If, instead, the center of the box remains fixed during the evolution, the same result is achieved at the additional price of introducing a selection rule: the initial and target state must have the same, definite, parity. A natural continuation of this work is to find explicit solutions fo the control problem, in particular, addressing the optimal control problem. Numerical schemes like those presented in~\cite{ibort2013numerical, lopez2017finite} are well-suited for this purpose.

As stated in the introduction, a transformation for the problem of rigid translations of the box introduced in~\cite{rouchon2003control,beauchard2006controllability}, different than the one considered here, was used to prove local exact controllability in neighborhoods of the eigenstates. This transformation converts the problem of the Laplace operator in the rigid moving box into a problem of a fixed operator under the action of a time-dependent and uniform electric field. The results in~\cite{chambrion2009controllability} show that this system, which is a bilinear control problem, is approximately controllable with piecewise constant controls. As a straightforward consequence of the stability result, Theorem~\ref{thm:abstract2}, it can be shown that the controls can be chosen to be continuously differentiable and therefore the corresponding solution of the control problem for the rigid moving box is indeed a strongly differentiable solution of the Schrödinger equation. It remains an open problem to see if this regularity of the solutions can also be achieved for the general movement of the box considered here. We will address this problem in the future.

In the recent research~\cite{duca2021} it is shown that the Laplace operator on a simply connected domain in $\mathbb{R}^n$ which is varying can be mapped to a situation in which one has a fixed domain and an operator which is time-dependent and of the type of a magnetic Laplacian. This has been also studied with less generality in~\cite{anza2015}. We believe that the results presented in this article and in~\cite{balmaseda2021controllability, ibort2015self} could be used together with these recent descriptions to extend the present results about global approximate controllability of the Schrödinger equation in moving domains to base manifolds of arbitrary dimension, that is, approximate controllability with admissible solutions of the Schrödinger equation, cf. Definition~\ref{def:admissible}. These would constitute a great improvement with respect to the current controllability results that were discussed in the introduction. 

\section*{Acknowledgments}
{\small
	A.B. and J.M.P.P. acknowledge support provided by the ``Agencia Estatal de Investigación (AEI)'' Research Project PID2020-117477GB-I00, by the QUITEMAD Project P2018/TCS-4342 funded by the Madrid Government (Comunidad de Madrid-Spain) and by the Madrid Government (Comunidad de Madrid-Spain) under the Multiannual Agreement with UC3M in the line of ``Research Funds for Beatriz Galindo Fellowships'' (C\&QIG-BG-CM-UC3M), and in the context of the V PRICIT (Regional Programme of Research and Technological Innovation).
	J.M.P.P acknowledges financial support from the Spanish Ministry of Science and Innovation, through the ``Severo Ochoa Programme for Centers of Excellence in R\&D'' (CEX2019-000904-S).
	A.B. acknowledges financial support by ``Universidad Carlos III de Madrid'' through Ph.D. program grant PIPF UC3M 01-1819, UC3M mobility grant in 2020 and the EXPRO grant No. 20-17749X of the Czech Science Foundation.
	D.L. was partially supported by ``Istituto Nazionale di Fisica Nucleare'' (INFN) through the project ``QUANTUM'' and the Italian National Group of Mathematical Physics (GNFM-INdAM), and acknowledges support by MIUR via PRIN 2017 (Progetto di Ricerca di Interesse Nazionale), project QUSHIP (2017SRNBRK).
	He also thanks the Department of Mathematics at ``Universidad Carlos III de Madrid'' for its hospitality.	
	\printbibliography
}

\end{document}